\newtheorem{theorem}{Theorem}
\newtheorem{lemma}[theorem]{Lemma}
\newtheorem{definition}{Definition}
\newtheorem{corollary}[theorem]{Corollary}
\newtheorem{remark}[theorem]{Remark}
\newtheorem{assumption}{Assumption}
\newcommand{\setdef}[2]{\{#1 \; : \; #2\}}
\newcommand{\subscr}[2]{{#1}_{\textup{#2}}}
\newcommand\ajustspaceandequationnumber{%
   \vspace{-\belowdisplayskip}
   \vspace{-\abovedisplayskip}
   \addtocounter{equation}{-1}}
\newcommand{\real}{\mathbb{R}}
\newcommand{\trans}{\mathsf{T}} 
\newcommand{\conjtrans}{\mathsf{H}}
\newcommand{\mc}{\mathcal}
\newcommand{\F}{\bar{\mathsf{F}}}
\DeclareSymbolFont{bbold}{U}{bbold}{m}{n}
\DeclareSymbolFontAlphabet{\mathbbold}{bbold}
\newcommand\oprocendsymbol{\hbox{$\square$}}
\newcommand\oprocend{\relax\ifmmode\else\unskip\hfill\fi\oprocendsymbol}
\begin{document}
\title{Minimum-gain Pole Placement with Sparse Static Feedback}
\author{Vaibhav Katewa, \IEEEmembership{Member, IEEE}, and Fabio Pasqualetti, \IEEEmembership{Member, IEEE}
\thanks{This work was supported in part by awards
    ARO-71603NSYIP and AFOSR-FA9550-19-1-0235.}
\thanks{V. Katewa was with the Department of Mechanical
    Engineering, University of California at Riverside, Riverside 92521 USA. He is now with the Department of Electrical Communication Engineering and the Robert Bosch Center for Cyber-Physical Systems, Indian Institute of Science, Bengaluru 560012, India (e-mail: \href{mailto:vkatewa@iisc.ac.in}{vkatewa@iisc.ac.in}).}
\thanks{F. Pasqualetti is with the Department of Mechanical
    Engineering, University of California at Riverside, Riverside 92521 USA (e-mail: \href{mailto:fabiopas@engr.ucr.edu}{fabiopas@engr.ucr.edu}).}}

%
%
\maketitle

\begin{abstract} The minimum-gain eigenvalue assignment/pole placement
  problem (MGEAP) is a classical problem in LTI systems with static
  state feedback. In this paper, we study the MGEAP when the state
  feedback has arbitrary sparsity constraints. We formulate the sparse
  MGEAP problem as an equality-constrained optimization problem and
  present an analytical characterization of its locally optimal
  solution in terms of eigenvector matrices of the closed loop
  system. This result is used to provide a geometric interpretation of
  the solution of the non-sparse MGEAP, thereby providing additional
  insights for this classical problem. Further, we develop an
  iterative projected gradient descent algorithm to obtain local
  solutions for the sparse MGEAP using a parametrization based on the
  Sylvester equation. We present a heuristic algorithm to compute the
  projections, which also provides a novel method to solve the sparse
  EAP. Also, a relaxed version of the sparse MGEAP is presented and an
  algorithm is developed to obtain approximately sparse local
  solutions to the MGEAP. Finally, numerical studies are presented to
  compare the properties of the algorithms, which suggest that the
  proposed projection algorithm converges in most cases.
\end{abstract}

\begin{IEEEkeywords}
Eigenvalue assignment, Minimum-gain pole placement, Optimization, Sparse feedback, Sparse linear systems
\end{IEEEkeywords}

\section{Introduction}
The Eigenvalue/Pole Assignment Problem (EAP) using static state
feedback is one of the central problems in the design of Linear Time
Invariant (LTI) control systems (e.g., see
\cite{RS-LN-TN-AP:14,YP:16a}). It plays a key role in system
stabilization and shaping its transient behavior. Given the following
LTI system
\begin{subequations}
\begin{align} \label{eq:open_loop1}
\mathcal{D}x(k) &= Ax(k) +Bu(k), \\  \label{eq:open_loop2}
u(k) &= Fx(k),
\end{align}
\end{subequations}
where $x\in\real^{n}$ is the state of the LTI system, $u\in\real^{m}$
is the control input, $A\in\real^{n\times n}$,
$B\in\real^{n\times m}$, and $\mc{D}$ denotes either the continuous
time differential operator or the discrete-time shift operator, the
EAP involves finding a real feedback matrix $F\in\real^{m\times n}$
such that the eigenvalues of the closed loop matrix
$A_c(F)\triangleq A+BF$ coincide with a given set
$\mc{S} = \{\lambda_1,\lambda_2,\cdots,\lambda_n\}$ that is closed
under complex conjugation.

It is well known that the existence of $F$ depends on the
controllability properties of the pair $(A,B)$. Further, for single
input systems ($m=1$), the feedback vector that assigns the
eigenvalues is unique and can be obtained using the Ackermann's
formula \cite{PJA-ANM:05}. On the other hand, for multi-input systems
($m>1$), the feedback matrix is not unique and there exists a
flexibility to choose the eigenvectors of the closed loop system. This
flexibility can be utilized to choose a feedback matrix that satisfies
some auxiliary control criteria in addition to assigning the
eigenvalues. For instance, the feedback matrix can be chosen to
minimize the sensitivity of the closed loop system to perturbations in
the system parameters, thereby making the system robust. This is known
as Robust Eigenvalue Assignment Problem (REAP) \cite{JK-NKN-PVD:85}.
Alternatively, one can choose the feedback matrix with minimum gain,
thereby reducing the overall control effort. This is known as Minimum
Gain Eigenvalue Assignment Problem (MGEAP) \cite{AP-RS-TN:15,YJP:16}.


Recently, considerable attention has been given to the study and
design of sparse feedback control systems, where certain entries of
the matrix $F$ are required to be zero. Feedback sparsity typically
arises in decentralized control problems for large scale and
interconnected systems with multiple controllers \cite{BB-FP-MAD:02},
where each controller has access to only some partial states of the
system. Such constraints in decentralized control problems are
typically specified by information patterns that govern which
controllers have access to which states of the system
\cite{MR-SL:06,AM-NCM-MCR-SY:12}. Sparsity may also be a result of the
special structure of a centralized control system which prohibits
feedback from some states to the controllers.

The feedback design problem with sparsity constraints is considerably more difficult than the unconstrained case. There have been numerous studies to determine the optimal feedback control law for H2/LQR/LQG control problems with sparsity, particularly when the controllers have access to only local information (see \cite{MR-SL:06, BB-FP-MAD:02,FL-MF-MRJ:11al,AM-NCM-MCR-SY:12} and the references therein).
While the optimal H2/LQR/LQG design problems with sparsity have a rich
history, studies on the REAP/MGEAP in the presence of \emph{arbitrary}
sparsity constraints are lacking. Even the problem of finding a
particular (not necessary optimal) sparse feedback matrix that solves
the EAP is not well studied.  In this paper, we study the EAP and
MGEAP with \emph{arbitrary} sparsity constraints on the feedback
matrix $F$. We provide analytical characterization for the solution of
sparse MGEAP and provide iterative algorithms to solve the sparse EAP
and MGEAP. We also briefly discuss the feasibility of the sparse EAP
problem.

\noindent \textbf{Related work} There have been numerous studies on
the optimal pole placement problem \textit{without} sparsity
constraints. For the REAP, authors have considered optimizing
different metrics which capture the sensitivity of the eigenvalues,
such as the condition number of the eigenvector matrix
\cite{JK-NKN-PVD:85, RS-AP-TN:14, MAR-SEF-AB-FT:09, RB-SGN:89,
  AV:00a}, departure from normality \cite{EKC:07} and others
\cite{ALT-YY:96, EKC:01}. Most of these methods use gradient-based
iterative procedures to obtain the solutions. For surveys and
comparisons of these REAP methods, see \cite{RS-AP-TN:14,
  AP-RS-TN-YY-VS-ALT:14, BAW:95} and the references therein.

Early works for MGEAP, including \cite{LFG-DJ:75, BK-RC:80}, presented
approximate solutions using low rank feedback and successive pole
placement techniques. Simultaneous robust and minimum gain pole
placement were studied in \cite{HKT-JL:97,AV:00a, AV:03, AV:00b}. For
a survey and performance comparison of these MGEAP studies, see
\cite{AP-RS-TN:15} and the references therein. The regional pole
placement problem was studied in \cite{SD-BC-DC:10}, \cite{SD-DC:14},
where the eigenvalues were assigned inside a specified region. While
these studies have provided useful insights on REAP/MGEAP, they do not
consider sparsity constraints on the feedback matrix. 
In contrast, we study the sparse EAP/MGEAP by explicitly including the
sparsity constraints in the problem formulation and solutions.

There have also been numerous studies on EAP with sparse
\emph{dynamic} LTI feedback. The concept of decentralized fixed modes
(DFMs) was introduced in \cite{SHW-EJD:73} and later refined in
\cite{JPC-ASM:76}. Decentralized fixed modes are those eigenvalues of
the system which cannot be shifted using a static/dynamic feedback
with \emph{fully decentralized} sparsity pattern (i.e. the case where
controllers have access only to local states). The remaining
eigenvalues of the system can be arbitrarily assigned. However, this
cannot be achieved in general using a static decentralized controller
and requires the use of \emph{dynamic} decentralized controller
\cite{SHW-EJD:73}. Other algebraic characterizations of the DFMs were
presented in \cite{BDOA-DJC:81, EJD-UO:83}. The notion of DFMs was
generalized for an \emph{arbitrary} sparsity pattern and the concept
of structurally fixed modes (SFMs) was introduced in
\cite{MES-DDS:81}. Graph theoretical characterizations of structurally
fixed modes were provided in \cite{VP-MES-DDS:84, VP-MES-DDS:83}. As
in the case of DFMs, assigning the non-SFMs also requires
\emph{dynamic} controllers.  These studies on DFMs and SFMs present
feasibility conditions and analysis methods for the EAP problem with
sparse \emph{dynamic} feedback. In contrast, we study both EAP and
MGEAP with sparse \emph{static} controllers, assuming the sparse EAP
is feasible. We remark that EAP with sparsity and static feedback
controller is in fact important for several network design and control
problems, and easier to implement than its dynamic counterpart.

Recently, there has been a renewed interest in studying linear systems
with sparsity constraints. Using a different approach than
\cite{VP-MES-DDS:84}, the original results regarding DFMs in
\cite{SHW-EJD:73} were generalized for an arbitrary sparsity pattern
by the authors in \cite{AA-MR:14a, AA-MR:14b}, where they also present
a sparse \emph{dynamic} controller synthesis algorithm. Further, there
have been many recent studies on minimum cost input/output and
feedback sparsity pattern selection such that the system has
controllability \cite{SP-GR-SK-APA-JR:17} and no structurally fixed
modes (see \cite{SP-SK-APA:16, SM-PC-MNB:18} and the references
therein). In contrast, we consider the problem of finding a
\emph{static} minimum gain feedback with a \emph{given} sparsity
pattern that solves the EAP.

\noindent\textbf{Contribution} The contribution of this paper is
three-fold. First, we study the MGEAP with static feedback and
arbitrary sparsity constraints (assuming feasibility of sparse
EAP). We formulate the sparse MGEAP as an equality constrained
optimization problem and present an analytical characterization of an
locally optimal sparse solution. As a minor contribution, we use this
result to provide a geometric insight for the non-sparse MGEAP
solutions. Second, we show that determining the feasibility of the
sparse EAP is NP-hard and present necessary and sufficient conditions
for feasibility. We develop two heuristic iterative algorithms to
obtain a local solution of the sparse EAP. The first algorithm is
based on repeated projections on linear subspaces. The second
algorithm is developed using the Sylvester equation based
parametrization and it obtains a solution via projection of a
non-sparse feedback matrix on the space of sparse feedback matrices
that solve the EAP. Third, using the latter EAP projection algorithm,
we develop a projected gradient descent method to obtain a local
solution to the sparse MGEAP. We also formulate a relaxed version of
the sparse MGEAP using penalty based optimization and develop an
algorithm to obtain approximately-sparse local solutions.

\noindent\textbf{Paper organization} The remainder of the
paper is organized as follows. In Section \ref{sec: model} we
formulate the sparse MGEAP optimization problem. In Section
\ref{sec:soln_opt_prob}, we obtain the solution of the optimization
problem using the Lagrangian theory of optimization. We also provide a
geometric interpretation for the optimal solutions of the non-sparse
MGEAP. In Section \ref{sec:algorithms}, we present two heuristic
algorithms for solving the sparse EAP. Further, we present a projected
gradient descent algorithm to solve the sparse MGEAP and also an
approximately-sparse solution algorithm for a relaxed version of the
sparse MGEAP. Section \ref{sec:num_study} contains numerical studies
and comparisons of the proposed algorithms. In Section \ref{sec:
  feasibility}, we discuss the feasibility of the sparse EAP. Finally,
Section~\ref{sec:conclusion} concludes the paper.

\section{Sparse MGEAP formulation}\label{sec: model} 

\subsection{Mathematical notation and preliminary properties}

We use the following properties to derive our results \cite{KBP-MSP:12,JRM-HN:99}:
\begin{enumerate} [label=P.\arabic*, align=left]
\item $\text{tr}(A) = \text{tr}(A^\trans) \: \text{ and } \:\text{tr}(ABC) = \text{tr}(CAB)$, \label{prop:trace}
\item $\Vert A \Vert_{ \text{F}}^{2} = \text{tr}(A^\trans A) = \text{vec}^\trans(A)\text{vec}(A)$, \label{prop:frob}
\item $\text{vec}(AB) = (I\otimes A)\text{vec}(B) = (B^{\trans} \otimes I)\text{vec}(A)$, \label{prop:vec1}
\item $\text{vec}(ABC) = (C^{\trans}\otimes A)\text{vec}(B)$, \label{prop:vec2}
\item $(A\otimes B)^{\trans} = A^{\trans} \otimes B^{\trans}$ and $(A\otimes B)^{\conjtrans} = A^{\conjtrans} \otimes B^{\conjtrans}$, \label{prop:kron}
\item $1_n^\trans(A\circ B)1_n = \text{tr}(A^\trans B)$, \label{prop:lag_mult}
\item $A\circ B = B\circ A \: \text{ and } \: A\circ (B\circ C) = (A\circ B)\circ C$, \label{prop:had1}
\item $\text{vec}(A\circ B) = \text{vec}(A)\circ \text{vec}(B),  (A\circ B)^\trans = A^\trans\circ B^\trans$, \label{prop:had2} 
\item $\frac{d}{dX}\text{tr}(AX) \!= \!A^\trans$, $\frac{d}{dX}\text{tr}(X^\trans X) \!=\! 2X$, $\frac{d}{dx}(Ax)\! =\! A$, \label{prop:der} \vspace{3pt}
\item $d(X^{-1}) = -X^{-1} dX X^{-1}$, \label{prop:diff_inv}
\item Let $D_xf$ and $D^2_xf$ be the gradient and Hessian of\\
 \hspace*{13pt}$f(x): \mathbb{R}^{n}\rightarrow \mathbb{R}$. Then, $df = (D_xf) ^{\trans} dx$ and \\
 \hspace*{10pt} $d^2f = (dx)^{\trans}(D^{2}_xf) dx$, \label{prop:grad_Hess}
 \item Projection of a vector $y\in\real^{n}$ on the null space of \hspace*{12pt} $A\in\real^{m\times n}$ is given by $y_p = [I_n-A^{+}A]y$. \label{prop:projection}
\end{enumerate}

The Kronecker sum of two square matrices $A$ and $B$ with dimensions $n$ amd $m$, respectively, is denoted by
\begin{align*}
A\oplus B = (I_m \otimes A) + B\otimes I_n.
\end{align*}

Further, we use the following notation throughout the paper:

\begin{tabular} {|c|l|}
\hline
$\Vert\cdot\Vert_2$ & Spectral norm \\
$\Vert\cdot\Vert_\text{F}$ & Frobenius norm\\
$<\cdot,\cdot>_F$ & Inner (Frobenius) product\\ 
$\lvert \cdot \rvert$ & Cardinality of a set \\

$\Gamma(\cdot)$ & Spectrum of a matrix\\
$\sigma_{min}(\cdot)$ & Minimum singular value of a matrix\\
$\text{tr}(\cdot )$  & Trace of a matrix \\
$(\cdot)^{+}$  & Moore-Penrose pseudo inverse \\
$(\cdot)^{\trans}$ & Transpose of a matrix\\
$\mathcal{R}(\cdot)$ & Range of a matrix\\ 
$A>0$ & Positive definite matrix $A$\\

$\circ$ & Hadamard (element-wise) product \\
$\otimes $ & Kronecker product \\
$(\cdot)^{*}$ & Complex conjugate\\
$(\cdot)^{\conjtrans}$ &  Conjugate transpose \\

$\text{supp}(\cdot)$ & Support of a vector \\
$\text{vec}(\cdot)$ & Vectorization of a matrix\\
$\text{diag}(a)$ & $n\times n$ Diagonal matrix with diagonal\\ 
 & elements given by $n$-dim vector $a$\\
$\text{Re}(\cdot)$ & Real part of a complex variable \\
$\text{Im}(\cdot)$ & Imaginary part of a complex variable\\

$1_n (0_n)$ & $n$-dim vector of ones (zeros)\\
$1_{n\times m}(0_{n\times m})$ & $n\times m$-dim matrix of ones (zeros)\\
$I_n$ & $n$-dim identity matrix\\
$e_i$ & $i$-th canonical vector\\
$T_{m,n}$ & Permutation matrix that satisfies\\
&  $\text{vec}(A^{\trans}) = T_{m,n}\text{vec}(A)$, $A\in\real^{m\times n}$\\
\hline
\end{tabular}

\vspace{4pt}

\subsection{Sparse MGEAP}
The sparse MGEAP involves finding a real feedback matrix
$F\in\real^{m\times n}$ with minimum norm that assigns the closed loop
eigenvalues of \eqref{eq:open_loop1}-\eqref{eq:open_loop2} at some
desired locations given by set
$\mc{S} = \{\lambda_1,\lambda_2,\cdots,\lambda_n\}$, and satisfies a
given sparsity constraints. Let
$\F\in\{0,1\}^{m\times n}$ denote a binary matrix that specifies the
sparsity structure of the feedback matrix $F$. If $\F_{ij} = 0$
(respectively $\F_{ij}=1$), then the $j^{\text{th}}$ state is
unavailable (respectively available) for calculating the
$i^{\text{th}}$ input. Thus,
\begin{align*}
F_{ij} = 
\begin{cases}
0  \quad \text{if} \:\:\F_{ij} = 0, \:\:\text{and} \\
\star \quad \text{if} \:\:\F_{ij} = 1,
\end{cases}
\end{align*}
where $\star$ denotes a real number. Let $\F^c\triangleq 1_{m\times n}-\F$ denote the complementary sparsity structure matrix. Further, (with a slight abuse of notation, c.f. \eqref{eq:open_loop1}) let $X\triangleq[x_1,x_2,\cdots,x_n]\in\mathbb{C}^{n\times n}, x_i\neq 0_n$ denote the non-singular eigenvector matrix of the closed loop matrix $A_c(F)=A+BF$.

The MGEAP can be mathematically stated as follows:
\begin{align} \label{eq:opt_cost}
\underset{F,X}{\min} &\quad \frac{1}{2}\:||F||_F^2  \hspace{110pt}
\end{align}
\ajustspaceandequationnumber
\begin{subequations}
\begin{align}  \label{eq:eigv_assgn}
\text{s.t.}& \quad (A+BF)X=X\Lambda, \\  \label{eq:spar_const}
 &\quad \F^c \circ F = 0_{m\times n},
\end{align}
\end{subequations}
where
$\Lambda = \text{diag}([\lambda_1,\lambda_2, \cdots,
\lambda_n]^{\trans})$ is the diagonal matrix of the desired
eigenvalues. Equations \eqref{eq:eigv_assgn} and \eqref{eq:spar_const}
represent the eigenvalue assignment and sparsity constraints,
respectively.

The constraint
  \eqref{eq:eigv_assgn} is not convex in $(F,X)$ and, therefore, the
  optimization problem \eqref{eq:opt_cost} is
  non-convex. Consequently, multiple local minima may exist. This is a
  common feature in various minimum distance and eigenvalue assignment
  problems \cite{DK-MV:15}, including the non-sparse MGEAP. 

  \begin{remark} (\textbf{Choice of norm}) The Frobenius norm measures
    the element-wise gains of a matrix, which is informative in
    sparsity constrained problems arising, for instance, in network
    control problems. It is also convenient for the analysis,
    particularly to compute the derivatives of the cost
    function. \oprocend
\end{remark}



\begin{definition}\textbf{(Fixed modes \cite{AA-MR:14a,SHW-EJD:73})} 
  The fixed modes of $(A,B)$ with respect to the sparsity constraints
  $\F$ are those eigenvalues of $A$ which cannot be changed using LTI
  static (and also dynamic) state feedback, and are denoted by
  \begin{align*}
    \Gamma_{f}(A,B,\F) \triangleq \underset{\textstyle F : F \circ
    \F^{c}=0}{\bigcap} \Gamma(A+BF). 
  \end{align*}  
\end{definition}

We make the following assumptions regarding the fixed modes and
feasibility of the optimization problem \eqref{eq:opt_cost}.

\begin{assumption} \label{assump:fix_mode_in_eigv_set}
The fixed modes of the triplet $(A,B,\F)$ are
included in the desired eigenvalue set $\mc{S}$, i.e.,
$\Gamma_{f}(A,B,\F) \subseteq \mc{S}$.
\end{assumption}

\begin{assumption} \label{assump:feasibility}
There exists at least one feedback matrix $F$
that satisfies constraints \eqref{eq:eigv_assgn}-\eqref{eq:spar_const}
for the given $\mc{S}$.
\end{assumption}

Assumption \ref{assump:fix_mode_in_eigv_set} is clearly necessary for the feasibility of the
optimization problem \eqref{eq:opt_cost}. Assumption \ref{assump:feasibility} is
restrictive because, in general, it is possible that a static feedback
matrix with a given sparsity pattern cannot assign the closed loop
eigenvalues to arbitrary locations (i.e. for an arbitrary set $\mc{S}$
satisfying Assumption \ref{assump:fix_mode_in_eigv_set})\footnote{Note that a sparse
  \emph{dynamic} feedback law can assign the eigenvalues to arbitrary
  locations under Assumption \ref{assump:fix_mode_in_eigv_set} \cite{AA-MR:14b}.}. In such
cases, only a few ($<n$) eigenvalues can be assigned independently and
other remaining eigenvalues are a function of them. To the best of our
knowledge, there are no studies on characterizing conditions for the
existence of a static feedback matrix for an arbitrary sparsity
pattern $\F$ and eigenvalue set $\mc{S}$ \cite{JR-JCW:99} (although
such characterization is available for \textit{dynamic} feedback laws
with arbitrary sparsity pattern \cite{MES-DDS:81, AA-MR:14a,
  AA-MR:14b}, and static output feedback for \textit{decentralized}
sparsity pattern \cite{JL-NK:95}). Thus, for the purpose of this
paper, we focus on finding the optimal feedback matrix
\textit{assuming} that at least one such feedback matrix exists. We
provide some preliminary results on the feasibility of the
optimization problem \eqref{eq:opt_cost} in Section~\ref{sec:
  feasibility}.

\section{Solution to the sparse MGEAP}\label{sec:soln_opt_prob} In
this section we present the solution to the optimization problem
\eqref{eq:opt_cost}. To this aim, we use the theory of Lagrangian
multipliers for equality constrained minimization problems.


\begin{remark} (\textbf{Conjugate
    eigenvectors}) \label{rem:conj_eigvec} We use the convention that
  the right (respectively, left) eigenvectors $(x_i,x_j)$
  corresponding to two conjugate eigenvalues $(\lambda_i,\lambda_j)$
  are also conjugate. Thus, if $\lambda_i = \lambda_j^{*}$, then
  $x_i=x_j^{*}$. \oprocend
\end{remark}

We use the real counterpart of \eqref{eq:eigv_assgn} for the analysis. For two complex conjugate eigenvalues $(\lambda_i,\lambda_i^{*})$ and corresponding eigenvectors $(x_i,x_i^{*})$, the following complex equation
\begin{align*}
(A+BF)\begin{bmatrix}x_i &x_i^{*}\end{bmatrix} = \begin{bmatrix}x_i & x_i^{*}\end{bmatrix} \left[\begin{smallmatrix} \lambda_i & 0\\ 0 & \lambda_i^{*} \end{smallmatrix}\right]
\end{align*}
 is equivalent to the following real equation
\begin{align*}
(A\!+\!BF)\!
\begingroup 
\setlength\arraycolsep{2pt}
\begin{bmatrix}\text{Re}(x_i) & \text{Im}(x_i)\end{bmatrix}\! \!=
                                \!\!\begin{bmatrix}\text{Re}(x_i)
                                  &\text{Im}(x_i)\end{bmatrix}\!\!
                                    \left[\begin{smallmatrix}
                                        \phantom{-}\text{Re}(\lambda_i)
                                        & \text{Im}(\lambda_i)\\
                                        -\text{Im}(\lambda_i) &
                                        \text{Re}(\lambda_i) \end{smallmatrix}\right] \endgroup 
                                                                .
\end{align*}
For each complex eigenvalue, the columns
$\begin{bmatrix}x_i & x_i^{*}\end{bmatrix}$ of $X$ are replaced by
$\begin{bmatrix}\text{Re}(x_i) & \text{Im}(x_i)\end{bmatrix}$ to
obtain a real $X_R$, and the sub-matrix
$\left[\begin{smallmatrix} \lambda_i & 0\\ 0 &
    \lambda_i^{*} \end{smallmatrix}\right]$ of $\Lambda$ is replaced
by
$\left[\begin{smallmatrix} \phantom{-}\text{Re}(\lambda_i) &
    \text{Im}(\lambda_i)\\ -\text{Im}(\lambda_i) &
    \text{Re}(\lambda_i) \end{smallmatrix}\right]$ to obtain a real
$\Lambda_R$. The real eigenvectors in $X$ and $X_R$, and real
eigenvalues in $\Lambda$ and $\Lambda_R$ coincide. Clearly, $X_R$ is
not the eigenvector matrix of $A+BF$ (c.f. Remark
\ref{rem:eigstr_assgn}), and $X$ can be obtained through the columns
of $X_R$. Thus, \eqref{eq:eigv_assgn} becomes
\begin{align} \label{eq:eigv_assgn_real}
(A+BF)X_R=X_R\Lambda_R,
\end{align}
and $X_R$ replaces the optimization variable $X$ in
\eqref{eq:opt_cost}. In the theory of equality constrained
optimization, the first-order optimality conditions are meaningful
only when the optimal point satisfies the following regularity
condition: the Jacobian of the constraints, defined by $J_b$, is full
rank. This regularity condition is mild and usually satisfied for most
classes of problems \cite{DGL-YY:08}.  Before presenting the main
result, we derive the Jacobian and state the regularity condition for
the problem~\eqref{eq:opt_cost}.

Computation of $J_b$ requires vectorization of the matrix constraints
\eqref{eq:eigv_assgn_real} and \eqref{eq:spar_const}. For this
purpose, let $x_R\triangleq \text{vec}(X_R)\in\mathbb{C}^{n^2}$,
$f\triangleq \text{vec}(F)\in\mathbb{R}^{mn}$, and let
$z\triangleq [x_R^{\trans},f^{\trans}]^{\trans}$ be the vector
containing all the independent variables of the optimization problem.
Further, let $n_s$ denote the total number of feedback sparsity
constraints (i.e. number of $1$'s in $\F^c$):
\begin{align*}
  \subscr{n}{s} &= | \setdef{ (i,j) }{ \F^c  = [\bar{\mathsf{f}}^c_{ij}],\, \bar{\mathsf{f}}^c_{ij} = 1 } |.
\end{align*}
Note that the constraint \eqref{eq:spar_const}  consists of $n_s$ non-trivial sparsity constraints, and can be
equivalently written as 
\begin{align}\label{eq:spar_const_vec}
  Qf = 0_{n_s},
\end{align}
where $Q=\begin{bmatrix} e_{q_1} & e_{q_2} & \dots & e_{q_{n_{s}}} \end{bmatrix}^\trans \in\{0,1\}^{n_s\times mn}$ with $\{q_1, \dots, q_{n_{s}}\} = \text{supp}(\text{vec}( \F^c))$ being the set of indices indicating the ones in $\text{vec}(\F^c)$.

\begin{lemma}{\bf \emph{(Jacobian of the constraints)}}\label{lem:jacobian} The Jacobian of the equality constraints \eqref{eq:eigv_assgn}-\eqref{eq:spar_const} is given by
\begin{align}\label{eq:jacobian}
 J_b(z)= \begin{bmatrix}
{A}_c(F)\!\oplus\! (\!-\Lambda_R^{\trans}) &  X_R^{\trans}\!\otimes\! B \\
0_{n_s\times n^2 } & Q
\end{bmatrix}.
\end{align}
\end{lemma}
\begin{proof}
  We construct the Jacobian $J_b$ by rewriting the constraints
  \eqref{eq:eigv_assgn_real} and \eqref{eq:spar_const},
   in vectorized form and taking their
  derivatives with respect to $z$. Constraint \eqref{eq:eigv_assgn_real}
  can be vectorized in the following two different ways (using
  \ref{prop:vec1} and \ref{prop:vec2}):
  \begin{subequations}
    \begin{align} \label{eq:eigv_assgn_vec1}
[(A+BF) \oplus (-\Lambda_R^{\trans})]x_R = 0_{n^2}, \\ \label{eq:eigv_assgn_vec2}
[A \oplus -(\Lambda_R^{\trans})]x_R + (X_R^{\trans}\otimes B)f = 0_{n^2}.
\end{align}
\end{subequations}
Differentiating \eqref{eq:eigv_assgn_vec1} w.r.t. $x_R$ and \eqref{eq:eigv_assgn_vec2} w.r.t $f$ yields the first (block) row of $J_b$. 
Differentiating \eqref{eq:spar_const_vec} w.r.t. $z$ yields the second
(block) row of $J_b$, thus completing the proof.
\end{proof}

We now state the optimality conditions for the
problem~\eqref{eq:opt_cost}.

\begin{theorem}{\bf \emph{(Optimality conditions)}}\label{thm:opt_feedback}
  Let $(\hat{X}, \hat{F})$ (equivalently
  $\hat{z} = [\hat{x}_R^{\trans},\hat{f}^{\trans}]^{\trans}$)
  satisfy the constraints
  \eqref{eq:eigv_assgn}-\eqref{eq:spar_const}. Let
  $\hat{L} = [\hat{l}_i]$, $i=1, \cdots,n$ be the left eigenvector
  matrix of $A_c(\hat{F})$, and let $\hat{L}_R$ be its real
  counterpart constructed by replacing $[\hat{l}_i,\hat{l}_i^{*}]$
  with $[\textup{Re}(\hat{l}_i) , -\textup{Im}(\hat{l}_i)]$. Let
  $J_b(z)$ be defined in Lemma \ref{lem:jacobian} and
  $P(z) = I_{n^2+mn}-J_b^{+}(z)J_b(z)$. Further, define
  $\bar{L} \triangleq 4 T_{n,m}(B^{\trans}\hat{L}_R\otimes I_n)$ and
  let
\begin{align} \label{eq:hessian}
\hat{D} \triangleq \begin{bmatrix} 
0_{n^2\times n^2} & \bar{L}^{\trans} \\
 \bar{L} &  2I_{mn} 
\end{bmatrix}.
\end{align}
Then, $(\hat{X},\hat{F})$ is a local minimum of the optimization problem \eqref{eq:opt_cost}
if and only if
\begin{subequations}
\begin{align} \label{eq:opt_cond1}
\hat{F} = -\F \circ (B^{\trans} \hat{L} \hat{X}^{\trans}), \\
  (A+B\hat{F})\hat{X}=\hat{X}\Lambda   \label{eq:right_eigvec} \\ \label{eq:left_eigvec}
  (A+B\hat{F})^{\trans} \hat{L} = \hat{L} \Lambda \\  \label{eq:regularity}
  J_b(\hat{z})\:\: \text{is full rank,}   \\ \label{eq:pos_definite}
  P(\hat{z}) \hat{D} P(\hat{z}) > 0.
\end{align}
\end{subequations}
\end{theorem}

\begin{proof}
  We prove the result using the Lagrange theorem for equality
  constrained minimization. Let $L_R\in\mathbb{R}^{n\times n}$ and
  $M\in\mathbb{R}^{m\times n}$ be the Lagrange multipliers associated
  with constraints \eqref{eq:eigv_assgn_real} and
  \eqref{eq:spar_const}, respectively. The Lagrange function for the
  optimization problem \eqref{eq:opt_cost} is given by
  \begin{align*}
    \mc{L} \overset{\ref{prop:frob}}{=}  & \frac{1}{2}\:\text{tr}(F^{\trans}F) + 2 \: 1_n^{\trans}[L_R\circ(A_c(F)X_R-X_R\Lambda_R)]1_n  \\
    + 1_m^{\trans}&[M\circ(\F^c \circ F) ]1_n \\
    \overset{\ref{prop:lag_mult},\ref{prop:had1}}{=} & \frac{1}{2}\:\text{tr}(F^{\trans}F) + 2\:  \text{tr}[L_R^{\trans}(A_c(F)X_R-X_R\Lambda_R)]  \\
+ & \text{tr}[(M\circ\F^c)^{\trans} F].
\end{align*}
\emph{Necessity:} We next derive the first-order necessary condition
for a stationary point. Differentiating $\mc{L}$ w.r.t. $X_R$ and
setting to $0$, we get \begin{align} \label{eq:l_der_X}
    \frac{d}{dX_R}\mc{L} \overset{\ref{prop:der}}{=}
    2[A_c^{\trans}(F)L_R-L_R\Lambda_R^{\trans} ] = 0_{n\times n}.
\end{align}
The real equation \eqref{eq:l_der_X} is equivalent to the complex equation \eqref{eq:left_eigvec}.
Equation \eqref{eq:right_eigvec} is a restatement of
\eqref{eq:eigv_assgn} for the optimal
$(\hat{F},\hat{X})$. Differentiating $\mc{L}$ w.r.t. $F$, we get
\begin{align} \label{eq:l_der_K} \frac{d}{dF}\mc{L}
    \overset{\ref{prop:der}}{=} F +2 B^{\trans}L_RX_R^{\trans}+ M\circ
    \F^c = 0_{m\times n}.
\end{align}
Taking the Hadamard product of \eqref{eq:l_der_K} with $\F^c$ and using \eqref{eq:spar_const}, we get (since $\F^{c}\circ \F^{c} = \F^{c}$)
\begin{align} \label{eq:l_der_K_1}
 \F^c \circ (2B^{\trans}L_RX_R^{\trans}) + M\circ \F^c = 0_{m\times n}
\end{align}
Replacing $M\circ \F^c$ from \eqref{eq:l_der_K_1} into \eqref{eq:l_der_K}, we get
\begin{align*}
  F = -\F \circ (2B^{\trans} L_R X_R^{\trans})  \overset{(a)} {=} -\F \circ (B^{\trans} L X^{\trans}),
\end{align*}
where $(a)$ follows from the definition of $L_R$ and $X_R$ and Remark
\ref{rem:conj_eigvec}. Equation \eqref{eq:regularity} is the
necessary regularity condition and follows from Lemma
\ref{lem:jacobian}

\noindent \emph{Sufficiency:} Next, we derive the second-order
sufficient condition for a local minimum by calculating the Hessian of
$\mc{L}$. Taking the differential of $\mc{L}$ twice, we get
\begin{align*} d^2 \mc{L} &= \text{tr}((dF)^{\trans}dF) +4
    \text{tr}(L_R^{\trans}BdFdX_R)
    \\ 
 &\overset{(\ref{prop:frob})}{=}   df^{\trans}df +4 \text{vec}^{\trans}(dF^{\trans}B^{\trans}L_R)dx_R \\
 &\overset{(\ref{prop:vec1},\ref{prop:kron})}{=}  df^{\trans}df+ df^{\trans}\bar{L}dx \\
 &= \frac{1}{2} \begin{bmatrix} dx^{\trans} &df^{\trans} \end{bmatrix} D \begin{bmatrix} dx\\df \end{bmatrix},
\end{align*}
where $D$ is the Hessian (c.f. \ref{prop:grad_Hess}) defined in
\eqref{eq:hessian}.  The sufficient second-order optimality condition
for the optimization problem requires the Hessian to be positive
definite in the kernel of the Jacobian at the optimal point
\cite[Chapter 11]{DGL-YY:08}. That is,
$y^\trans D y > 0, \; \forall y: J_b(z)y = 0$. This condition is equivalent to $P(z) D P(z) > 0$, since $J_b(z)y = 0$ if and only if $y = P(z)s$ for a $s\in \real^{n^2+mn}$ \cite{DGL-YY:08}. Since the projection matrix $P(z)$ is symmetric, \eqref{eq:pos_definite} follows, and this concludes the proof.
\end{proof}


Observe that the Hadamard product in \eqref{eq:opt_cond1} guarantees
that the feedback matrix satisfies the sparsity constraints given in
\eqref{eq:spar_const}. However, the optimal sparse feedback $\hat{F}$
cannot be obtained by sparsification of the optimal non-sparse
feedback. 
The optimality condition \eqref{eq:opt_cond1} is an implicit condition
in terms of the closed loop right and left eigenvector matrices. Next,
we provide an explicit optimality condition in terms of
$\{\hat{L},\hat{X}\}$.

\begin{corollary} (\textbf{Stationary point of
    \eqref{eq:opt_cost}}) \label{cor:stat_pt}
  $\hat{Z} \triangleq [\hat{X}^{\trans},\hat{L}^{\trans}]^{\trans}$ is
  a stationary point of the optimization problem \eqref{eq:opt_cost}
  if and only if

\begin{align}    \label{eq:stationary_soln}
\bar{A}\hat{Z} - \hat{Z} \Lambda = \bar{B}_1[\mathsf{F}\circ(\bar{B}_1^{\trans} \bar{I} \hat{Z} \hat{Z}^{\trans} \bar{B}_2)]\bar{B}_2^{\trans} \hat{Z},
\end{align}
where,
\begin{align*}
\bar{A} &\triangleq \begin{bmatrix} A & 0_{n\times n} \\ 0_{n\times n} & A^{\trans}\end{bmatrix}, \:\bar{B}_1 \triangleq \begin{bmatrix} B & 0_{n\times n} \\ 0_{n\times m} & I_n\end{bmatrix}, \\
\bar{B}_2 &\triangleq \begin{bmatrix} I_n & 0_{n\times m} \\ 0_{n\times n} & B\end{bmatrix}, \:\:  \mathsf{F} \triangleq \begin{bmatrix} \F & 0_{m\times m} \\ 0_{n\times n} & \F^{\trans}\end{bmatrix}, \text{and}\\
\bar{I} & \triangleq \begin{bmatrix} 0_{n\times n} & I_n \\ I_n & 0_{n\times n}\end{bmatrix}.
\end{align*}
\end{corollary}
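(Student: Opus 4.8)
The plan is to show that equation \eqref{eq:stationary_soln} is simply a compact, $\hat{F}$-free restatement of the three stationary-point conditions \eqref{eq:opt_cond1}, \eqref{eq:right_eigvec}, and \eqref{eq:left_eigvec} from Theorem \ref{thm:opt_feedback}: I would stack the right and left eigenvector equations into a single $2n\times n$ matrix equation and then eliminate $\hat{F}$ using the feedback optimality condition \eqref{eq:opt_cond1}. Since a stationary point is by definition a triple $(\hat{X},\hat{L},\hat{F})$ satisfying those three first-order conditions, and since $\hat F$ is fully determined by $(\hat X,\hat L)$ through \eqref{eq:opt_cond1}, the equivalence runs in both directions through one block computation, so I would treat the two implications together.

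First I would evaluate the left-hand side. Using the block definitions of $\bar{A}$ and $\hat{Z} = [\hat{X}^\trans, \hat{L}^\trans]^\trans$, and recalling that $\Lambda$ is diagonal so $\Lambda^\trans = \Lambda$, a direct block multiplication gives
\begin{align*}
\bar{A}\hat{Z} - \hat{Z}\Lambda = \begin{bmatrix} A\hat{X} - \hat{X}\Lambda \\ A^\trans\hat{L} - \hat{L}\Lambda^\trans \end{bmatrix}.
\end{align*}
By the eigenvector equations \eqref{eq:right_eigvec}--\eqref{eq:left_eigvec}, these two blocks equal $-B\hat{F}\hat{X}$ and $-\hat{F}^\trans B^\trans\hat{L}$, respectively.

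Next I would reduce the right-hand side to the same quantity, working from the inside out. Since $\bar{I}\hat{Z} = [\hat{L}^\trans, \hat{X}^\trans]^\trans$, the matrix $\bar{B}_1^\trans\bar{I}\hat{Z}\hat{Z}^\trans\bar{B}_2$ is a $2\times 2$ block array whose diagonal blocks are $B^\trans\hat{L}\hat{X}^\trans$ (top-left, size $m\times n$) and $\hat{X}\hat{L}^\trans B$ (bottom-right, size $n\times m$). The key step is the Hadamard product with the block-diagonal mask $\mathsf{F} = \operatorname{blkdiag}(\F,\F^\trans)$: it annihilates the off-diagonal blocks, and using \eqref{eq:opt_cond1} together with property \ref{prop:had2} (namely $(A\circ B)^\trans = A^\trans\circ B^\trans$), the surviving diagonal blocks are exactly
\begin{align*}
\F\circ(B^\trans\hat{L}\hat{X}^\trans) = -\hat{F}, \qquad \F^\trans\circ(\hat{X}\hat{L}^\trans B) = -\hat{F}^\trans.
\end{align*}
Hence $\mathsf{F}\circ(\bar{B}_1^\trans\bar{I}\hat{Z}\hat{Z}^\trans\bar{B}_2) = \operatorname{blkdiag}(-\hat{F},-\hat{F}^\trans)$; pre-multiplying by $\bar{B}_1$ and post-multiplying by $\bar{B}_2^\trans\hat{Z} = [\hat{X}^\trans,(B^\trans\hat{L})^\trans]^\trans$ then collapses the right-hand side to $[-(B\hat{F}\hat{X})^\trans,\,-(\hat{F}^\trans B^\trans\hat{L})^\trans]^\trans$, which matches the left-hand side. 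For the converse I would run the identical computation with $\hat{F}$ \emph{defined} by \eqref{eq:opt_cond1}; then \eqref{eq:stationary_soln} splits blockwise into $(A+B\hat{F})\hat{X} = \hat{X}\Lambda$ and $(A+B\hat{F})^\trans\hat{L} = \hat{L}\Lambda^\trans$, recovering \eqref{eq:right_eigvec}--\eqref{eq:left_eigvec} and hence stationarity.

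I expect the main obstacle to be purely the careful block-by-block bookkeeping, since the factors live in spaces of mismatched-looking dimensions ($\bar{B}_1\in\real^{2n\times(m+n)}$, $\bar{B}_2^\trans\in\real^{(n+m)\times 2n}$, and $\mathsf{F}\in\{0,1\}^{(m+n)\times(n+m)}$) and the partitions must be tracked precisely for the products to conform. The one substantive observation that makes everything align, rather than any deep argument, is that the block-diagonal structure of $\mathsf{F}$ simultaneously decouples the right- and left-eigenvector equations and enforces the transpose symmetry $\F^\trans\circ(\cdot)^\trans = (\F\circ(\cdot))^\trans$ between the two feedback blocks, which is exactly what lets the single equation \eqref{eq:stationary_soln} encode both of the coupled conditions at once.
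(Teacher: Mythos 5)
Your proposal is correct and follows essentially the same route as the paper's proof: stack the right and left eigenvector equations \eqref{eq:right_eigvec}--\eqref{eq:left_eigvec} into block form, eliminate $\hat{F}$ via \eqref{eq:opt_cond1}, and verify through block manipulation with the Hadamard mask $\mathsf{F}$ that the result is exactly \eqref{eq:stationary_soln}. Your only addition is making the converse direction explicit (defining $\hat{F}$ from \eqref{eq:opt_cond1} and splitting \eqref{eq:stationary_soln} blockwise), which the paper leaves implicit in its reversible chain of equalities.
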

\begin{proof}
Combining \eqref{eq:right_eigvec} and \eqref{eq:left_eigvec} and using $\Lambda^{\trans} = \Lambda$, we get
\begin{align*}
&\begin{bmatrix} A\hat{X} -\hat{X}\Lambda \\  A^{\trans}\hat{L} -\hat{L}\Lambda \end{bmatrix} =-\begin{bmatrix} B\hat{F}\hat{X}\\ \hat{F}^{\trans}B^{\trans}\hat{L}    \end{bmatrix} \\
&\Rightarrow \bar{A}\hat{Z} - \hat{Z}\Lambda = -\bar{B}_1 \begin{bmatrix} \hat{F} & 0 \\ 0 & \hat{F}^{\trans} \end{bmatrix} \bar{B}_2^{\trans} \hat{Z} \\
 &= \bar{B}_1 \begin{bmatrix} \F \circ (B^{\trans} \hat{L} \hat{X}^{\trans}) & 0 \\ 0 & \F^{\trans} \circ (\hat{X}\hat{L} ^{\trans}B) \end{bmatrix} \bar{B}_2^{\trans} \hat{Z} \\
& = \bar{B}_1 \left(\mathsf{F} \circ \left\{\bar{B}_1^{\trans}  \begin{bmatrix} \hat{L}\hat{X}^{\trans} & 0 \\ 0 & \hat{X}\hat{L}^{\trans}\end{bmatrix}\bar{B}_2 \right\}    \right)\bar{B}_2^{\trans} \hat{Z} \\
& = \bar{B}_1 \left(\mathsf{F} \circ \left\{\bar{B}_1^{\trans}  \bar{I}(\hat{Z} \hat{Z}^{\trans} \circ \bar{I})\bar{B}_2 \right\}    \right)\bar{B}_2^{\trans} \hat{Z} \\
& =  \bar{B}_1\{\mathsf{F}\circ(\bar{B}_1^{\trans} \bar{I} \hat{Z} \hat{Z}^{\trans} \bar{B}_2)\}\bar{B}_2^{\trans} \hat{Z},
\end{align*}
where the equalities follow from the Hadamard product.
\end{proof}

\begin{remark} (\textbf{Partial spectrum
    assignment}) \label{rem:partial_assgn} The results of Theorem
  \ref{thm:opt_feedback} and Corollary \ref{cor:stat_pt} are also
  valid when specifying only $p<n$ eigenvalues (the remaining
  eigenvalues are functionally related to them; see also the
  discussion below Assumption \ref{assump:feasibility}). In this case,
  $\Lambda\in\mathbb{C}^{p\times p}$,
  $\hat{X}\in\mathbb{C}^{n\times p}$ and
  $\hat{L}\in\mathbb{C}^{n\times p}$. While partial assignment may be
  useful in some applications, in this paper we focus on assigning all
  the eigenvalues. \oprocend
\end{remark}

\begin{remark} \textbf{(General eigenstructure assignment)} \label{rem:eigstr_assgn}
Although the optimization problem \eqref{eq:opt_cost} is formulated by considering $\Lambda$ to be diagonal, the result in Theorem \ref{thm:opt_feedback} is valid for any general $\Lambda$ satisfying $\Gamma(\Lambda) = \mc{S}$. For instance, we can choose $\Lambda$ in a Jordan canonical form. However, note that for a general $\Lambda$, $X$ will cease to be an eigenvector matrix.  \oprocend 
\end{remark}

A solution of the optimization problem \eqref{eq:opt_cost} can be
obtained by numerically/iteratively solving the matrix equation
\eqref{eq:stationary_soln}, which resembles a Sylvester type equation
with a non-linear right side, and using \eqref{eq:opt_cond1} to
compute the feedback matrix. The regularity and local minimum of the
solution can be verified using \eqref{eq:regularity} and
\eqref{eq:pos_definite}, respectively. Since the optimization problem
is not convex, only local minima can be obtained via this
procedure. To improve upon the local solutions, the procedure can be
repeated for different initial conditions to solve
\eqref{eq:stationary_soln}. However, convergence to a global minimum
is not guaranteed.

The convergence of the iterative techniques to solve
\eqref{eq:stationary_soln} depends substantially on the initial
conditions. If they are not chosen properly, convergence may not be
guaranteed. Further, the solution of \eqref{eq:stationary_soln} can
also represent a local maxima. Therefore, instead of solving
\eqref{eq:stationary_soln} directly, we use a different approach based
on the gradient descent procedure to obtain a locally minimum
solution. Details of this approach and corresponding algorithms are
presented in Section \ref{sec:algorithms}.


\subsection{Results for the non-sparse MGEAP}
In this subsection, we present some results specific to the case when
the optimization problem \eqref{eq:opt_cost} does not have any
sparsity constraints (i.e. $\F=1_{m\times n}$). Although the
non-sparse MGEAP has been studied previously, these results are novel
and further illustrate the properties of an optimal
solution.

We begin by presenting a geometric interpretation of the optimality
conditions in Theorem \ref{thm:opt_feedback} with $B=I_n$, i.e. all
the entries of $A$ can be perturbed independently. In this case, the
optimization problem \eqref{eq:opt_cost} can be written
as: 
\begin{align} \label{eq:opt_prob_pert}
 \underset{X}{\min} &\quad \frac{1}{2}\:||A-X\Lambda X^{-1}||_F^2.
\end{align}

Since $A$ and $R(X)\triangleq X\Lambda X^{-1}$ are elements (or
vectors) of the matrix inner product space with Frobenius norm, a
solution of the optimization problem \eqref{eq:opt_prob_pert} is given
by the projection of $A$ on the manifold
$\mathcal{M}\triangleq \{R(X): X \text{ is non-singular}\}$. This
projection can be obtained by solving the normal equation, which
states that the optimal error vector
$\hat{F} = A-\hat{X}\Lambda \hat{X}^{-1}$ should be orthogonal to the
tangent plane of the manifold $\mathcal{M}$ at the optimal point
$\hat{X}$ \cite{PAA-RM-RS:08}. The next result shows that the
optimality conditions derived in Theorem \ref{thm:opt_feedback} are in
fact the normal equations for the optimization problem
\eqref{eq:opt_prob_pert}.

\begin{lemma} (\textbf{Geometric interpretation})
Let $\F = 1_{m\times n}$ and $B = I_n$. Then, Equations \eqref{eq:opt_cond1}-\eqref{eq:left_eigvec} are equivalent to the following normal equation:
\begin{align} \label{eq:normal_eq}
<\hat{F},\mathcal{T}_{\mathcal{M}}(\hat{X})>_F = 0,
\end{align}
where $\mathcal{T}_{\mathcal{M}}(X)$ denotes the tangent space of $\mathcal{M}$ at $X$.
\end{lemma}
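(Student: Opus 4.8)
The plan is to make the geometric claim precise by computing the tangent space $\mathcal{T}_{\mathcal{M}}(\hat X)$ explicitly, reducing the orthogonality condition \eqref{eq:normal_eq} to a matrix commutator identity, and then matching that identity against the algebraic optimality conditions \eqref{eq:opt_cond1}--\eqref{eq:left_eigvec}. First I would record the simplifications induced by $B = I_n$ and $\F = 1_{m\times n}$: condition \eqref{eq:opt_cond1} collapses to the feedback $-\hat L\hat X^{\trans}$, so the error vector of \eqref{eq:opt_prob_pert} is $\hat F = A - \hat X\Lambda\hat X^{-1} = \hat L\hat X^{\trans}$, while \eqref{eq:right_eigvec}--\eqref{eq:left_eigvec} simply identify $\hat X$ and $\hat L$ as the right and left eigenvector matrices of the closed loop $A_c \triangleq \hat X\Lambda\hat X^{-1}$.

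Next I would differentiate the map $R(X) = X\Lambda X^{-1}$ to obtain the tangent space. Using \ref{prop:diff_inv}, $dR = (dX)\Lambda X^{-1} - X\Lambda X^{-1}(dX)X^{-1}$; evaluating at $\hat X$ and substituting the free direction $W \triangleq (dX)\hat X^{-1}$ gives the clean parametrization $dR = WA_c - A_c W$. Since $\hat X$ is nonsingular, $W$ ranges over all of $\real^{n\times n}$, so $\mathcal{T}_{\mathcal{M}}(\hat X) = \{\,WA_c - A_c W : W\in\real^{n\times n}\,\}$, i.e. the image of the commutator map $W \mapsto WA_c - A_c W$.

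With this, the normal equation \eqref{eq:normal_eq} reads $\operatorname{tr}\!\big(\hat F^{\trans}(WA_c - A_c W)\big) = 0$ for all $W$. I would move $A_c$ around with the cyclic property \ref{prop:trace} to rewrite the left side as $\operatorname{tr}\!\big((A_c\hat F^{\trans} - \hat F^{\trans}A_c)W\big)$, so that \eqref{eq:normal_eq} is equivalent to the single matrix identity $A_c\hat F^{\trans} = \hat F^{\trans}A_c$, i.e. $\hat F$ commutes with $A_c^{\trans}$. The forward implication is then a direct check: substituting $\hat F = \hat L\hat X^{\trans}$ and using \eqref{eq:right_eigvec}--\eqref{eq:left_eigvec} together with $\Lambda^{\trans} = \Lambda$ gives $A_c\hat F^{\trans} = \hat X\Lambda\hat L^{\trans} = \hat F^{\trans}A_c$, confirming the commutator vanishes.

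For the converse I would argue that, since $A_c^{\trans}$ is diagonalizable with eigenvector matrix $\hat L$ and (generically distinct) eigenvalues $\Lambda$, any matrix commuting with $A_c^{\trans}$ has the form $\hat F = \hat L\,\Delta\,\hat L^{-1}$ for a diagonal $\Delta$; invoking the biorthonormal normalization $\hat L^{\trans}\hat X = I_n$ available by Remark \ref{rem:norm_const}, which gives $\hat L^{-1} = \hat X^{\trans}$, turns this into $\hat F = (\hat L\Delta)\hat X^{\trans}$, i.e. exactly \eqref{eq:opt_cond1} after absorbing $\Delta$ into the scaling of the left eigenvectors. I expect the main obstacle to be this converse step and the accompanying bookkeeping: one must pin down the scaling freedom in $\hat L$ so that $\hat L\hat X^{\trans}$ is well defined and reproduces the true error $A - A_c$, justify diagonalizability and distinctness of the closed loop spectrum, and confirm that restricting the tangent directions $W$ to real matrices is harmless because $\hat F$ and $A_c$ are real, so the complex eigenvector matrices appear only as intermediate quantities. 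The forward direction and the tangent space computation are routine once the commutator form is in hand.
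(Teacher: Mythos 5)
Your proposal is correct and takes essentially the same route as the paper: compute the tangent space from the differential of $R(X)$, reduce the normal equation to the commutator identity $A_c\hat{F}^{\trans} = \hat{F}^{\trans}A_c$, verify it in the forward direction using the eigenvector relations, and resolve the converse by simultaneous diagonalization of commuting matrices, absorbing the diagonal factor into the scaling of $\hat{L}$. The only organizational difference is that you derive the commutator form once (via the substitution $W=(dX)\hat{X}^{-1}$) and use it for both directions, whereas the paper reaches it only in its sufficiency step and proves necessity by a direct trace computation; both arguments also rest on the same genericity caveat (distinct closed-loop eigenvalues, so that matrices commuting with $A_c$ are diagonalized by its eigenvectors), which you correctly flag as the main point needing justification.
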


\begin{proof}
  We begin by characterizing the tangent space
  $\mathcal{T}_{\mathcal{M}}(X)$, which is given by the first order
  approximation of $R(X)$:
\begin{align*}
R(X+dX)& = (X+dX)\Lambda (X+dX)^{-1} \\
& \overset{(\ref{prop:diff_inv})}{=} R(X) + dX\Lambda X^{-1} - X\Lambda X^{-1} dX X^{-1} \\
& \hspace{42pt} + \text{higher order terms}.
\end{align*}
Thus, the tangent space is given by
\begin{align*}
\mathcal{T}_{\mathcal{M}}(X) = \{ Y \Lambda X^{-1} - X\Lambda X^{-1} Y X^{-1} : Y \in \mathbb{C}^{n\times n}\}
\end{align*}
\textit{Necessity:} Using $\hat{F}$ given by \eqref{eq:opt_cond1}, we get
\begin{align*}
<\hat{F},&\mathcal{T}_{\mathcal{M}}(\hat{X})> = \text{tr}(\hat{F}^{\trans} (Y \Lambda \hat{X}^{-1} - \hat{X}\Lambda\hat{X}^{-1} Y \hat{X}^{-1}))\\
& = -\text{tr}(\hat{X}\hat{L}^{\trans} Y \Lambda \hat{X}^{-1}) + \text{tr}(\hat{X}\hat{L}^{\trans} \hat{X}\Lambda \hat{X}^{-1} Y \hat{X}^{-1}) \\
\overset{(\ref{prop:trace})}{=}& -\text{tr}(\hat{L}^{\trans} Y \Lambda) + \text{tr}(\hat{L}^{\trans} \hat{X}\Lambda \hat{X}^{-1} Y) \\
\overset{(a)}{=} &-\text{tr}(\hat{L}^{\trans} Y \Lambda) + \text{tr}(\Lambda \hat{L}^{\trans} \hat{X} \hat{X}^{-1} Y)  \overset{(\ref{prop:trace})}{=} 0,
\end{align*}
where $(a)$ follows from the fact that $\Lambda$ and $\hat{L}^{\trans}\hat{X}$ commute.

\noindent\textit{Sufficiency:} From \eqref{eq:normal_eq}, we get
\begin{align*}
\text{tr}(\hat{F}^{\trans} (Y \Lambda \hat{X}^{-1} - \hat{X}\Lambda \hat{X}^{-1} Y \hat{X}^{-1})) = 0 \\
\overset{(\ref{prop:trace})}{\Rightarrow} \text{tr}[ (\Lambda \hat{X}^{-1} \hat{F}^{\trans} - \hat{X}^{-1} \hat{F}^{\trans} \hat{X}\Lambda \hat{X}^{-1})Y] = 0.
\end{align*}
Since the above equation is true for all $Y \in \mathbb{C}^{n\times n}$, we get
\begin{align*}
\Lambda \hat{X}^{-1} &\hat{F}^{\trans} - \hat{X}^{-1} \hat{F}^{\trans} \hat{X}\Lambda \hat{X}^{-1} = 0_{n\times n} \\
&\Rightarrow \hat{X} \Lambda \hat{X}^{-1} \hat{F}^{\trans} =  \hat{F}^{\trans} \hat{X}\Lambda \hat{X}^{-1}\\
&\Rightarrow A_c(\hat{F}) \hat{F}^{\trans} =  \hat{F}^{\trans} A_c(\hat{F}).
\end{align*}
Thus, $A_c(\hat{F})$ and $\hat{F}^{\trans}$ commute and have common
left and right eigenspaces \cite{PL:85}, i.e.,
$\hat{F}^{\trans} = -\hat{X} G \hat{X}^{-1} =
-\hat{X}\hat{L}^{\trans}$, where $G$ is a diagonal matrix. This
completes the proof.
\end{proof}

Next, we show the equivalence of the non-sparse MGEAP for two orthogonally similar systems.

\begin{lemma} (\textbf{Invariance under orthogonal transformation}) Let
  $\F = 1_{m\times n}$ and $(A_1,B_1)$, $(A_2,B_2)$ be two
  orthogonally similar systems such that $A_2 = PA_1P^{-1}$ and
  $B_2 = PB_1$, with $P$ being an orthogonal matrix. Let optimal
  solutions of \eqref{eq:opt_cost} for the two systems be denoted by
  $(\hat{X}_1,\hat{L}_1,\hat{F}_1)$ and
  $(\hat{X}_2,\hat{L}_2,\hat{F}_2)$, respectively. Then
  \begin{align}
    \begin{split}
      &\hat{X}_2 = P\hat{X}_1, \; \hat{L}_2 = P\hat{L}_1,  \; \hat{F}_2 =
      \hat{F}_1P^{\trans}, \text{ and }\\
      &||\hat{F}_1||_F = ||\hat{F}_2||_F.
    \end{split}
  \end{align}
\end{lemma}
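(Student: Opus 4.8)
The plan is to show that the orthogonal change of coordinates $P$ induces a \emph{cost-preserving bijection} between the feasible sets of the MGEAP instances for $(A_1,B_1)$ and $(A_2,B_2)$, so that the two problems have identical minimizers up to this bijection. Concretely, I would introduce the linear map $\psi:(X,F)\mapsto(PX,\,FP^{\trans})$, whose inverse $(X',F')\mapsto(P^{\trans}X',\,F'P)$ is of the same form because $P^{\trans}P=PP^{\trans}=I_n$; thus $\psi$ is a linear automorphism of the common variable space $\mathbb{C}^{n\times n}\times\mathbb{R}^{m\times n}$, and it remains only to check that it carries the feasible set and objective of system~$1$ to those of system~$2$.

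For feasibility, writing $A_c^{(i)}(F)\triangleq A_i+B_iF$ and using $A_2=PA_1P^{\trans}$, $B_2=PB_1$, a one-line computation gives $A_c^{(2)}(FP^{\trans})=P\,A_c^{(1)}(F)\,P^{\trans}$, hence $A_c^{(2)}(FP^{\trans})(PX)=P\,A_c^{(1)}(F)X$. Therefore the assignment constraint \eqref{eq:eigv_assgn} for system~$1$, namely $A_c^{(1)}(F)X=X\Lambda$, holds if and only if $A_c^{(2)}(FP^{\trans})(PX)=(PX)\Lambda$ holds for system~$2$; nonsingularity of $X$ and $PX$ are equivalent since $P$ is invertible. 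Orthogonality further gives $(Px_i)^{\conjtrans}(Px_i)=x_i^{\conjtrans}x_i$, so the normalization constraint \eqref{eq:eigv_norm} is preserved, and with $\F=1_{m\times n}$ there is no sparsity constraint to verify. Hence $\psi$ maps the feasible set of system~$1$ bijectively onto that of system~$2$.

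For the objective, \ref{prop:frob} and \ref{prop:trace} yield $\|FP^{\trans}\|_F^2=\text{tr}(PF^{\trans}FP^{\trans})=\text{tr}(F^{\trans}F\,P^{\trans}P)=\|F\|_F^2$, so $\psi$ leaves the cost in \eqref{eq:opt_cost} unchanged. A cost-preserving homeomorphism between feasible sets sends local (respectively global) minimizers to local (respectively global) minimizers; applied to an optimal $(\hat X_1,\hat F_1)$ this shows $(\hat X_2,\hat F_2)=(P\hat X_1,\,\hat F_1P^{\trans})$ is optimal for system~$2$, and the displayed computation already gives $\|\hat F_2\|_F=\|\hat F_1\|_F$. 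For the left-eigenvector matrix I would transpose $A_c^{(2)}(\hat F_2)=P\,A_c^{(1)}(\hat F_1)\,P^{\trans}$ and invoke \eqref{eq:left_eigvec} to see that $P\hat L_1$ solves $A_c^{(2)}(\hat F_2)^{\trans}(P\hat L_1)=(P\hat L_1)\Lambda^{\trans}$; substituting into \eqref{eq:opt_cond1} and using $B_2^{\trans}(P\hat L_1)(P\hat X_1)^{\trans}=B_1^{\trans}\hat L_1\hat X_1^{\trans}P^{\trans}$ reproduces $\hat F_2=-B_2^{\trans}\hat L_2\hat X_2^{\trans}$, confirming $\hat L_2=P\hat L_1$.

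The two points needing care are: (i) fixing the scaling of $\hat L_2$, since the left-eigenvector matrix is determined only up to a diagonal factor (Remark~\ref{rem:norm_const}), so one must pick the normalization paired with $\hat X_2$ that reproduces the feedback formula \eqref{eq:opt_cond1}; and (ii) the role of orthogonality: a general similarity $A_2=PA_1P^{-1}$, $B_2=PB_1$ still preserves the spectrum and hence feasibility, but the cost becomes $\|\hat F_1P^{-1}\|_F$, which generally differs from $\|\hat F_1\|_F$, so the minimizers would not correspond---orthogonality is exactly what makes the Frobenius cost invariant. If one instead wishes to argue directly through Theorem~\ref{thm:opt_feedback}, the main obstacle is the second-order condition \eqref{eq:pos_definite}: one checks that $\hat D$ is unchanged (since $B_2^{\trans}\hat L_2=B_1^{\trans}\hat L_1$), that $J_b(\hat z_2)$ differs from $J_b(\hat z_1)$ by invertible left/right factors so that regularity \eqref{eq:regularity} persists, and that with $\Theta\triangleq\blkdiag(I_n\otimes P,\,I_n\otimes P,\,P\otimes I_m)$ orthogonal one has $P(\hat z_2)=\Theta P(\hat z_1)\Theta^{\trans}$, whence $P(\hat z_2)\hat DP(\hat z_2)=\Theta[P(\hat z_1)\hat DP(\hat z_1)]\Theta^{\trans}$ is an orthogonal congruence preserving positive definiteness; this route additionally needs the commutation identity $T_{m,n}(P\otimes I_m)=(I_m\otimes P)T_{m,n}$ and is the more computational of the two.
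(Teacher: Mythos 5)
Your proof is correct, but it takes a genuinely different route from the paper's. The paper argues directly through the first-order optimality conditions of Theorem~\ref{thm:opt_feedback}: it substitutes $\hat{X}_2 = P\hat{X}_1$, $\hat{F}_2 = \hat{F}_1 P^{\trans}$ into \eqref{eq:right_eigvec} for system $2$ and recovers \eqref{eq:right_eigvec} for system $1$, notes the analogous computation for \eqref{eq:left_eigvec}, obtains $\hat{F}_2 = -B_2^{\trans}\hat{L}_2\hat{X}_2^{\trans} = -B_1^{\trans}\hat{L}_1\hat{X}_1^{\trans}P^{\trans} = \hat{F}_1 P^{\trans}$ from \eqref{eq:opt_cond1} exactly as you do in your third paragraph, and closes with the same cyclic-trace argument for $\|\hat{F}_1\|_F = \|\hat{F}_2\|_F$. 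Your primary argument---the cost-preserving linear bijection $\psi:(X,F)\mapsto(PX,\,FP^{\trans})$ between the two feasible sets---is different and in one respect stronger: a cost-preserving homeomorphism of feasible sets transports local and global minimizers directly, whereas the paper's verification of the first-order conditions by itself only establishes a correspondence of stationary points; under the if-and-only-if characterization of Theorem~\ref{thm:opt_feedback}, completing that route in full would also require checking regularity \eqref{eq:regularity} and the second-order condition \eqref{eq:pos_definite}, which the paper leaves implicit and which you correctly identify (and sketch via the orthogonal congruence of the projected Hessians) as the delicate part of the direct route. What the paper's approach buys is brevity and the eigenvector relations $\hat{X}_2 = P\hat{X}_1$, $\hat{L}_2 = P\hat{L}_1$ appearing as immediate consequences of the optimality conditions; what yours buys is a self-contained argument that local minimality itself is preserved, together with the correct caveat (consistent with Remark~\ref{rem:norm_const}) that $\hat{L}_2$ is pinned down only up to the diagonal scaling paired with $\hat{X}_2$ through \eqref{eq:opt_cond1}.
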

\begin{proof}
From \eqref{eq:right_eigvec}, we have
\begin{align*}
(A_2+B_2\hat{F_2})\hat{X_2} = \hat{X_2}\Lambda \\
\Rightarrow (PA_1P^{-1}+PB_1\hat{F_1}P^{\trans})P\hat{X_1} = P\hat{X_1}\Lambda\\
\Rightarrow (A_1+B_1\hat{F_1})\hat{X_1} = \hat{X_1}\Lambda.
\end{align*}
Similar relation can be shown between $\hat{L}_1$ and $\hat{L}_2$ using \eqref{eq:left_eigvec}. Next, from \eqref{eq:opt_cond1}, we have
\begin{align*}
\hat{F}_2 &= -B_2^{\trans}\hat{L}_2\hat{X}_2^{\trans}    =       -B_1^{\trans}\hat{L}_1\hat{X}_1^{\trans}P^{\trans} = \hat{F}_1P^{\trans}.
\end{align*}
Finally,
$||\hat{F}_1||_F^2 = \text{tr}(\hat{F}_1^{\trans} \hat{F}_1)
\overset{(\ref{prop:trace})}{=} \text{tr}(\hat{F}_2^{\trans}
\hat{F}_2) = ||\hat{F}_2||_F^2 $.
\end{proof}

Recall from Remark \ref{rem:partial_assgn} that Theorem \ref{thm:opt_feedback} is also valid for MGEAP with partial spectrum assignment. Next, we consider the case when only one real eigenvalue needs to assigned for the MGEAP while the remaining eigenvalues are unspecified. In this special case, we can explicitly characterize the global minimum of \eqref{eq:opt_cost} as shown in the next result. 

\begin{corollary} (\textbf{One real eigenvalue assignment}) \label{cor:1R_eig_assgn}
Let $\F = 1_{m\times n}$, $\Lambda \in \mathbb{R} $,  and $B = I_n$. Then, the global minima of the optimization problem \eqref{eq:opt_cost} is given by $\hat{F}_{gl} = -\sigma_{\text{min}}(A-\Lambda I_n)u v^{\trans}$, where $u$ and $v$ are unit norm left and right singular vectors, respectively, corresponding to $\sigma_{\text{min}}(A-\Lambda I_n)$. Further, $\Vert \hat{F}_{gl} \rVert_F = \sigma_{\text{min}}(A-\Lambda I_n)$.
 \end{corollary}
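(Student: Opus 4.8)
The plan is to bypass the general optimality conditions of Theorem~\ref{thm:opt_feedback}, which only certify \emph{local} minima, and instead derive a direct lower bound on the cost that is attained exactly by the claimed $\hat{F}_{gl}$, thereby certifying \emph{global} optimality. First I would reformulate the feasibility constraint. Since $B = I_n$ and a single real eigenvalue $\Lambda$ is assigned (c.f.\ Remark~\ref{rem:partial_assgn} with $p=1$), constraint \eqref{eq:eigv_assgn} reduces to requiring $\Lambda \in \Gamma(A+F)$. Because $A$, $F$, and $\Lambda$ are real, this is equivalent to the existence of a real unit vector $x$ (the normalized eigenvector) with $(A+F)x = \Lambda x$, i.e.
\begin{align*}
Fx = -(A-\Lambda I_n)x.
\end{align*}

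Next I would establish the lower bound. For \emph{any} feasible $F$ with associated unit eigenvector $x$, the chain
\begin{align*}
\Vert F \Vert_\text{F} \geq \Vert F \Vert_2 \geq \Vert Fx \Vert = \Vert (A-\Lambda I_n)x \Vert \geq \sigma_{\text{min}}(A-\Lambda I_n)
\end{align*}
holds, where the last inequality is the variational characterization of the smallest singular value over unit vectors. Hence $\tfrac{1}{2}\Vert F\Vert_\text{F}^2 \geq \tfrac{1}{2}\sigma_{\text{min}}^2(A-\Lambda I_n)$ for every feasible $F$. Finally I would verify that $\hat{F}_{gl} = -\sigma_{\text{min}}(A-\Lambda I_n)\,u v^{\trans}$ is feasible and attains this bound. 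Using the singular value relation $(A-\Lambda I_n)v = \sigma_{\text{min}}(A-\Lambda I_n)\,u$ together with $\Vert v\Vert = 1$, one computes $(A+\hat{F}_{gl})v = Av - \sigma_{\text{min}} u (v^{\trans}v) = Av - (A-\Lambda I_n)v = \Lambda v$, so $\Lambda$ is indeed assigned with eigenvector $v$; and by~\ref{prop:frob}, $\Vert \hat{F}_{gl}\Vert_\text{F} = \sigma_{\text{min}}(A-\Lambda I_n)\Vert u\Vert\,\Vert v\Vert = \sigma_{\text{min}}(A-\Lambda I_n)$, matching the lower bound. Consequently $\hat{F}_{gl}$ is a global minimizer. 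As a consistency check, one can confirm this agrees with \eqref{eq:opt_cond1}: the corresponding left eigenvector is $\hat{L} = \sigma_{\text{min}}u$, which satisfies $(A+\hat{F}_{gl})^{\trans}u = \Lambda u$ by the companion SVD relation $(A-\Lambda I_n)^{\trans}u = \sigma_{\text{min}}v$.

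The main obstacle, and the point deserving the most care, is that the lower bound must hold for \emph{all} feasible $F$ rather than only the rank-one candidates, so one must argue that every feasible $F$ admits a real unit eigenvector $x$ for $\Lambda$; this is precisely where the reality of $\Lambda$ is invoked to guarantee a real eigenvector, and where the passage from $\Vert F\Vert_\text{F}$ to $\Vert Fx\Vert$ via the operator norm $\Vert F\Vert_2$ is essential to close the gap between the Frobenius and spectral norms. The achievability half is routine once the SVD relations are substituted.
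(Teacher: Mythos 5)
Your proof is correct, and it takes a genuinely different route from the paper's. The paper works \emph{through} the first-order optimality conditions of Theorem~\ref{thm:opt_feedback}: it substitutes $\hat{F} = -\hat{l}\hat{x}^{\trans}$ from \eqref{eq:opt_cond1} into the right/left eigenvector equations \eqref{eq:right_eigvec}--\eqref{eq:left_eigvec}, deduces that any stationary point must satisfy $(A-\Lambda I_n)\hat{x} = \beta \hat{\tilde{l}}$ and $(A-\Lambda I_n)^{\trans}\hat{\tilde{l}} = \beta\hat{x}$ --- i.e.\ $(\hat{x},\hat{\tilde{l}},\beta)$ is a singular triple of $A-\Lambda I_n$ with cost $\beta^2/2$ --- and then selects $\beta = \sigma_{\text{min}}$. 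You instead bypass stationarity entirely: the chain $\Vert F\Vert_\text{F} \geq \Vert F\Vert_2 \geq \Vert Fx\Vert = \Vert(A-\Lambda I_n)x\Vert \geq \sigma_{\text{min}}(A-\Lambda I_n)$ bounds the cost of \emph{every} feasible $F$ (using the real eigenvector guaranteed by $\Lambda\in\real$), and the rank-one candidate attains the bound. The comparison cuts both ways. The paper's argument yields a full classification of the stationary points (one per singular triple), which illuminates the optimization landscape; but as a proof of \emph{global} optimality it tacitly assumes that a global minimizer exists and is a regular stationary point, so the Lagrangian conditions apply to it. Your argument is more elementary and logically self-contained --- lower bound plus achievability is a complete certificate of global optimality requiring no existence, regularity, or second-order reasoning --- though it says nothing about the remaining critical points. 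Your closing consistency check, $(A+\hat{F}_{gl})^{\trans}u = \Lambda u$ via $(A-\Lambda I_n)^{\trans}u = \sigma_{\text{min}}v$, in effect recovers the paper's stationarity computation in reverse, so the two proofs meet in the middle.
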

 
\begin{proof}
Since $\Lambda\in \mathbb{R}$, $\hat{X}\in\mathbb{R}^{n}\triangleq \hat{x}$ with $\|\hat{x}\|_2=1$, and $\hat{L}\in\mathbb{R}^{n}\triangleq \hat{l}$. Let $\hat{l} = \beta \hat{\tilde{l}}$  where $\beta \triangleq \lVert \hat{l} \rVert_2>0$.
Substituting $\hat{F} = - \hat{l} \hat{x}^{\trans}$ from \eqref{eq:opt_cond1} into \eqref{eq:right_eigvec}-\eqref{eq:left_eigvec}, we get
\begin{align*}
(A-\hat{l} \hat{x}^{\trans})\hat{x} &= \hat{x} \Lambda \Rightarrow (A-\Lambda I_n) \hat{x} = \beta \hat{\tilde{l}} \quad \text{and,}\\
(A^{\trans}-\hat{x} \hat{l}^{\trans} )\hat{l} &= \hat{l} \Lambda \Rightarrow (A-\Lambda I_n)^{\trans} \hat{\tilde{l}} = \beta\hat{x}.
\end{align*}
The above two equations imply that the unit norm vectors $\hat{x}$ and $\hat{\tilde{l}}$ are left and right singular vectors of $A-\Lambda I_n$ associated with the singular value $\beta$. Since $\lVert \hat{F} \rVert_F^2 = \text{tr}(\hat{F}^{\trans} \hat{F}) = \text{tr}(\hat{x}\hat{l}^{\trans} \hat{l}\hat{x}^{\trans}) = \beta^2$, we pick $\beta$ as the minimum singular value of $A-\Lambda I_n$, and the proof is complete. 
\end{proof}

We conclude this subsection by presenting a brief comparison of the non-sparse MGEAP solution with deflation techniques for eigenvalue assignment.
 For $B=I_n$, an alternative method to solve the non-sparse EAP is via the Wielandt deflation technique \cite{JHW:88}. Wielandt deflation achieves pole assignment by modifying the matrix $A$ in $n$ steps $A\rightarrow A_1\rightarrow A_2 \rightarrow \cdots \rightarrow A_n$. Step $i$ shifts one eigenvalue of $A_{i-1}$ to a desired location $\lambda_i$, while keeping the remaining eigenvalues of $A_{i-1}$ fixed. This is achieved by using the feedback $F^{i}_{df} = - (\mu_i - \lambda_i)v_iz_i^{\trans}$, where $\mu_i$ and $v_i$ are any eigenvalue and right eigenvector pair of $A_{i-1}$, and $z_i$ is any vector such that $z_i^{\trans}v_i = 1$. Thus, the overall feedback that solves the EAP is given as $F_{df} = \sum_{i=1}^n F^{i}_{df}$.

 It is interesting to compare the optimal feedback expression in
 \eqref{eq:opt_cond1},
 $\hat{F} = -\sum_{i=1}^n \hat{l}_i\hat{x}_i^{\trans}$, with the
 deflation feedback. Both feedbacks are sum of $n$ matrices, where
 each matrix has rank $1$. However, the Wielandt deflation has an
 inherent special structure and a restrictive property that, in each
 step, all except one eigenvalue remain unchanged.  Furthermore, each
 rank$-1$ term in $\hat{F}$ and $F_{df}$ involves the right/left
 eigenvectors of the closed and open loop matrix,
 respectively. Clearly, since $\hat{F}$ is the minimum-gain solution
 of \eqref{eq:opt_cost}, $||\hat{F}||_F\leq ||F_{df}||_F$.

\section{Solution algorithms}  \label{sec:algorithms}
In this section, we present an iterative algorithm to obtain a solution to the sparse MGEAP in \eqref{eq:opt_cost}. To develop the algorithm, we first present two algorithms for computing non-sparse and approximately-sparse solutions to the MGEAP, respectively. Next, we present two heuristic algorithms to obtain a sparse solution of the EAP (i.e. any sparse solution, which is not necessarily minimum-gain). Finally, we use these algorithms to develop the algorithm for sparse MGEAP. Note that although our focus is to develop the sparse MGEAP algorithm, the other algorithms presented in this section are novel in themselves to the best of our knowledge. 

We make the following assumptions:

\begin{assumption} \label{assump:no_fixed_modes}
The triplet $(A,B,\F)$ has no fixed modes, i.e., $\Gamma_{f}(A,B,\F) = \emptyset$. 
\end{assumption}

\begin{assumption} \label{assump:open_closed_eigv_disjoint}
The open and closed loop eigenvalue sets are
disjoint, i.e., $\Gamma(A)\cap \Gamma(\Lambda) =\emptyset$, and $B$
has full column rank.
\end{assumption}

Assumption \ref{assump:open_closed_eigv_disjoint} is not restrictive since if there are any
common eigenvalues in $A$ and $\Lambda$, we can use a preliminary
sparse feedback $F_p$ to shift the eigenvalues of $A$ to some other
locations such that $\Gamma(A+BF_p)\cap \Gamma(\Lambda)
=\emptyset$. Due to Assumption \ref{assump:no_fixed_modes}, such a $F_p$ always
exists. Then, we can solve the modified MGEAP\footnote{Although the
  minimization cost of the modified MGEAP is $0.5||F_p+F||_F^2$, it
  can be solved using techniques similar to solving MGEAP in
  \eqref{eq:opt_cost}.} with parameters $(A+BF_p, B,\Lambda,\F)$. If
$F$ is the sparse solution of this modified problem, then the solution
of the original problem is $F_p+F$.

To avoid complex domain calculations in the algorithms, we use the
real eigenvalue assignment constraint \eqref{eq:eigv_assgn_real}. For
convenience, we use a slight abuse of notation to denote $X_R$ and
$\Lambda_R$ as $X$ and $\Lambda$, respectively, in this section. Note
that the invertibility of $X$ is equivalent to the invertibility of
$X_R$.

\subsection{Algorithms for the non-sparse MGEAP}
We now present two iterative algorithms to obtain non-sparse and
approximately-sparse solutions to the MGEAP, respectively. To develop the
algorithms, we use the Sylvester equation based parametrization
\cite{SPB-EDS:82, AV:00a}. In this parametrization, instead of
defining $(F,X)$ as free variables, we define a parameter
$G\triangleq FX \in \mathbb{R}^{m \times n}$ as the free
variable. With this parametrization, the non-sparse MGEAP is stated as:

\begin{align} \label{eq:opt_cost_unsp2}
\underset{G}{\min} &\quad J =  \frac{1}{2}\:||F||_F^2 \hspace{40pt}
\end{align}
\ajustspaceandequationnumber
\begin{subequations}
\begin{align}  \label{eq:sylves}
\text{s.t.}\quad AX-X\Lambda+BG&=0, \\ \label{eq:F_G}
F = GX^{-1}&.
\end{align}
\end{subequations}
Note that, for any given $G$, we can solve the Sylvester equation \eqref{eq:sylves} to obtain $X$. Assumption \ref{assump:open_closed_eigv_disjoint} guarantees that \eqref{eq:sylves} has a unique solution \cite{NJH:02}. Further, we can use \eqref{eq:F_G} to obtain a non-sparse feedback matrix $F$. Thus, \eqref{eq:opt_cost_unsp2} is an unconstrained optimization problem in the free parameter $G$. 

The Sylvester-based parametrization requires the unique solution $X$
of \eqref{eq:sylves} to be non-singular, which holds generically if
(i) $(A,-BG)$ is controllable and (ii) $(\Lambda,-BG)$ is observable
\cite{SPB-EDS:81}. Since the system has no fixed modes, $(A,B)$ is
controllable \cite{SHW-EJD:73}. This implies that condition (i) holds
generically for almost all $G$. Further, since $B$ is of full column
rank, condition (ii) is guaranteed if $(\Lambda,-G)$ is
observable. These conditions are mild and are satisfied for almost all
instances as confirmed in our simulations (see Section
\ref{sec:num_study}).


The next result provides the gradient and Hessian of the cost $J$ w.r.t. to the parameter $g\triangleq \text{vec}(G)$.
\begin{lemma} \textbf{(Gradient and Hessian of $\mathbf{J})$} \label{lem:unsp_grad_Hess}
The gradient  and Hessian of the cost $J$ in \eqref{eq:opt_cost_unsp2} with respect to $g$ is given by
\begin{align} \label{eq:grad_unsp}
\frac{dJ}{dg}\!=&\!\underbrace{\left[(X^{-1}\!\otimes I_m)\!+\!(I_n\!\otimes B^{\trans})\tilde{A}^{-\trans}(X^{-1}\!\otimes F^{\trans})\right]}_{\textstyle \triangleq Z(F,X)} f, \\ \label{eq:Hess_unsp}
\frac{d^{2}J}{d^{2}g} &\triangleq H(F,X) = Z(F,X)Z^{\trans}\!(F,X) \nonumber\\
& + Z_1(F,X)Z^{\trans}(F,X) +  Z(F,X)Z_1^{\trans}(F,X),\\
\text{where} &\:\: Z_1(F,X)\triangleq (I_n\otimes B^{\trans}) \tilde{A}^{-\trans} (X^{-1}F^{\trans}\otimes I_n) T_{m,n},\nonumber \\
\text{and} &\:\: \tilde{A} \triangleq A\oplus (-\Lambda^{\trans}).\nonumber
\end{align}
\end{lemma}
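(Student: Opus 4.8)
The plan is to compute the differential of the cost $J = \frac{1}{2}\|F\|_F^2 = \frac{1}{2}\operatorname{tr}(F^\trans F)$ by treating it as a composite function: $G$ determines $X$ through the Sylvester equation \eqref{eq:sylves}, and then $(G,X)$ determines $F$ through \eqref{eq:F_G}. The first step is to relate $dX$ (or $dx = \operatorname{vec}(dX)$) to $dG$ (or $dg$) by differentiating \eqref{eq:sylves}. Vectorizing the Sylvester equation using \ref{prop:vec2} gives $\tilde A\, x = -(I_n\otimes B)g$ with $\tilde A = A\oplus(-\Lambda^\trans)$, so $\tilde A\, dx = -(I_n\otimes B)\,dg$, i.e. $dx = -\tilde A^{-1}(I_n\otimes B)\,dg$; Assumption \textbf{A4} guarantees $\tilde A$ is invertible. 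Next I would differentiate \eqref{eq:F_G}: from $F = GX^{-1}$ and property \ref{prop:diff_inv} we get $dF = dG\,X^{-1} - GX^{-1}\,dX\,X^{-1} = dG\,X^{-1} - F\,dX\,X^{-1}$.

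**Assembling the gradient.**
The goal is to express $df = \operatorname{vec}(dF)$ linearly in $dg$. Vectorizing $dF = dG\,X^{-1} - F\,dX\,X^{-1}$ with \ref{prop:vec1} and \ref{prop:vec2} yields $df = (X^{-\trans}\otimes I_m)\,dg - (X^{-\trans}\otimes F)\,dx$. Substituting $dx = -\tilde A^{-1}(I_n\otimes B)\,dg$ and collecting terms gives $df = \big[(X^{-\trans}\otimes I_m) + (X^{-\trans}\otimes F)\tilde A^{-1}(I_n\otimes B)\big]\,dg$. I then transpose the bracketed matrix using \ref{prop:kron} to match the stated form: its transpose is $(X^{-1}\otimes I_m) + (I_n\otimes B^\trans)\tilde A^{-\trans}(X^{-1}\otimes F^\trans) = Z(F,X)$, so $df = Z^\trans(F,X)\,dg$. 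Finally, since $dJ = \operatorname{tr}(F^\trans dF) = f^\trans df$ by \ref{prop:frob}, we obtain $dJ = f^\trans Z^\trans(F,X)\,dg = (Z(F,X)f)^\trans dg$, and comparing with $dJ = (D_gJ)^\trans dg$ from \ref{prop:grad_Hess} identifies the gradient $dJ/dg = Z(F,X)f$, establishing \eqref{eq:grad_unsp}.

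**The Hessian.**
For \eqref{eq:Hess_unsp} the plan is to differentiate the gradient $dJ/dg = Z(F,X)f$ once more, using $d^2J = (dg)^\trans (D^2_gJ)\,dg$ from \ref{prop:grad_Hess}. Writing $d(Zf) = Z\,df + (dZ)f$ and recalling $df = Z^\trans\,dg$, the first term contributes $Z Z^\trans\,dg$, giving the $Z Z^\trans$ summand. The remaining work is to show $(dZ)f = \big(Z_1 Z^\trans + \tfrac{1}{2}(\text{symmetric correction})\big)dg$, i.e. to differentiate $Z(F,X)$ through its dependence on $F$ and $X$. I expect this to be the main obstacle: $Z$ depends on $X$ both directly (via the $X^{-1}$ factors) and through $F = GX^{-1}$, and one must differentiate $\tilde A^{-\trans}$, the Kronecker factors $X^{-1}\otimes F^\trans$, and $X^{-1}\otimes I_m$, keeping careful track of which differentials act on $F$ versus $X$. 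The permutation matrix $T_{m,n}$ enters precisely here, through property \ref{prop:per_mat} and the identity $\operatorname{vec}(A^\trans)=T_{m,n}\operatorname{vec}(A)$, when one rewrites a term of the form $(X^{-1}F^\trans\otimes I_n)\operatorname{vec}(dF^\trans)$ in terms of $dg$. The bookkeeping is lengthy but mechanical; after substituting $df = Z^\trans dg$ throughout and symmetrizing (the Hessian of a scalar is symmetric, which forces the two cross terms $Z_1 Z^\trans$ and $Z Z_1^\trans$ to appear together), the expression \eqref{eq:Hess_unsp} follows.
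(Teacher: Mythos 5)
Your gradient derivation is complete and matches the paper's step for step: vectorize \eqref{eq:sylves} to get $dx = -\tilde{A}^{-1}(I_n\otimes B)\,dg$, differentiate \eqref{eq:F_G}, vectorize, and identify $df = Z^{\trans}(F,X)\,dg$, so \eqref{eq:grad_unsp} is established. The gap is in the Hessian half. You correctly reduce the problem to evaluating $(dZ)f$, and you correctly anticipate that $T_{m,n}$ enters through $\text{vec}(dF^{\trans}) = T_{m,n}\,df$; your route would in fact succeed, since the $X^{-1}\otimes dF^{\trans}$ piece of $dZ$ applied to $f$ yields exactly $Z_1Z^{\trans}\,dg$ and the two $dX^{-1}$ pieces combine to $ZZ_1^{\trans}\,dg$. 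But you never perform this computation --- you defer it as ``lengthy but mechanical'' --- and that deferral hides the only nontrivial content of \eqref{eq:Hess_unsp}, namely where $Z_1$ comes from. Your fallback appeal to symmetry (``the Hessian of a scalar is symmetric, which forces the two cross terms to appear together'') does not close it: if $(dZ)f = (Z_1Z^{\trans}+N)\,dg$, symmetry of $H$ only constrains the antisymmetric part of $N$ and cannot show $N = ZZ_1^{\trans}$, so each cross term must be extracted by the same Kronecker/permutation manipulations you are postponing. Two smaller points: $\tilde{A} = A\oplus(-\Lambda^{\trans})$ is constant, so there is nothing to differentiate in $\tilde{A}^{-\trans}$ (listing it among the factors to be differentiated suggests a misunderstanding of what depends on $g$); and the target identity is $(dZ)f = (Z_1Z^{\trans}+ZZ_1^{\trans})\,dg$, not ``$Z_1Z^{\trans}$ plus half a symmetric correction.''

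For comparison, the paper avoids differentiating $Z$ altogether, and this is the simplification you are missing. Since $g$ is the independent variable, $d^2g=0$, and since the vectorized Sylvester equation $\tilde{A}x + (I_n\otimes B)g = 0$ is linear in $(x,g)$, also $d^2x=0$. A second differential of $dF\,X = dG - F\,dX$ then gives the closed form $d^2F = -2\,(dF)(dX)X^{-1}$, i.e. $d^2f = -2(X^{-\trans}\otimes dF)\,dx$. Writing $d^2J = (df)^{\trans}df + f^{\trans}(d^2f)$, the first term is $dg^{\trans}ZZ^{\trans}dg$, and the single cross term is handled by the one identity $(X^{-1}\otimes (dF)^{\trans})f = \text{vec}((dF)^{\trans}FX^{-\trans}) = (X^{-1}F^{\trans}\otimes I_n)T_{m,n}\,df$, producing $2\,dg^{\trans}Z_1Z^{\trans}dg$; symmetrizing the quadratic form (via \ref{prop:grad_Hess}) gives $Z_1Z^{\trans}+ZZ_1^{\trans}$. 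That is one Kronecker identity instead of a full product-rule expansion of $Z$. If you wish to keep your route, you must exhibit both identities stated above explicitly; as written, your proposal proves the gradient formula but only sketches the Hessian.
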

 \begin{proof}
   Vectorizing \eqref{eq:sylves} using \ref{prop:vec1} and taking the
   differential,
 \begin{align}
 \tilde{A}x &+ (I_n\otimes B)g = 0 \nonumber \\ \label{eq:dx_dg}
 \Rightarrow dx &= -\tilde{A}^{-1} (I_n\otimes B)dg.
 \end{align}
 Note that due to Assumption \ref{assump:open_closed_eigv_disjoint}, $\tilde{A}$ is invertible. Taking the differential of \eqref{eq:F_G} and vectorizing, we get
 \begin{align}
 &\quad \quad dF \overset{\ref{prop:diff_inv}}{=} dG X^{-1} - \underbrace{G X^{-1}}_F dX X^{-1} \label{eq:dF_dG_dX}  \\
 &\overset{\ref{prop:vec1},\ref{prop:vec2}}{\Rightarrow} df = (X^{-\trans}\otimes I_m)dg - (X^{-\trans}\otimes F)dx \nonumber \\ 
& \overset{\eqref{eq:dx_dg}}{=} \underbrace{[(X^{-\trans}\otimes I_m) + (X^{-\trans}\otimes F)\tilde{A}^{-1} (I_n\otimes B)]}_{\textstyle  \overset{\ref{prop:kron}}{=}Z^{\trans}(F,X) }dg. \label{eq:df_dg}
 \end{align}
The differential of cost $J \overset{\ref{prop:frob}}{=} \frac{1}{2}f^{\trans}f$ is given by $dJ {=} f^{\trans}df$. Using \eqref{eq:df_dg} and \ref{prop:grad_Hess}, we get \eqref{eq:grad_unsp}. 
To derive the Hessian, we compute the second-order differentials of the involved variables. Note that since $g(G)$ is an independent variable, $d^{2}g = 0 (d^2G=0)$ \cite{JRM-HN:99}. Further, the second-order differential of \eqref{eq:sylves} yields $d^{2}X = 0$. Rewriting \eqref{eq:dF_dG_dX} as $dF X = dG-FdX$, and taking its differential and vectorization, we get
\begin{align}
(d^2F)X + (dF) (dX) &= -(dF) (dX)\nonumber \\
\Rightarrow d^2F &= -2(dF)(dX)X^{-1} \nonumber \\
\overset{\ref{prop:vec2}}{\Rightarrow} d^2f &= -2(X^{-\trans}\otimes dF)dx. \label{eq:d2f}
\end{align}


Taking the second-order differential of $J$ we get
\begin{align}
d^{2}J &= (df)^{\trans}df + f^{\trans} (d^{2}f) = (df)^{\trans}df + (d^{2}f)^{\trans} f \nonumber \\
 \overset{\eqref{eq:df_dg},\eqref{eq:d2f},\ref{prop:kron}}{=}& dg^{\trans} ZZ^{\trans} dg -2 dx^{\trans}\underbrace{(X^{-1}\otimes (dF)^{\trans})f}_{\overset{\ref{prop:kron}}{=}\text{vec}((dF)^{\trans} FX^{-\trans})} \nonumber \\
   \overset{\ref{prop:kron}}{=} &dg^{\trans} ZZ^{\trans} dg - 2dx^{\trans} (X^{-1}F^{\trans}\otimes I_n) T_{m,n}df\nonumber\\
  \overset{\eqref{eq:dx_dg},\eqref{eq:df_dg}}{=} &dg^{\trans} ZZ^{\trans} dg \nonumber\\
 +& \underbrace{2dg^{\trans} (I_n\otimes B^{\trans}) \tilde{A}^{-\trans} (X^{-1}F^{\trans}\otimes I_n) T_{m,n} Z^{\trans} dg}_{dg^{\trans}(Z_1 Z^{\trans} + Z Z_1^{\trans}) dg}. \label{eq:d2J}
\end{align}
The Hessian in \eqref{eq:Hess_unsp} follows from \eqref{eq:d2J} and
\ref{prop:grad_Hess}.
\end{proof}
 
The first-order optimality condition of the unconstrained problem
\eqref{eq:opt_cost_unsp2} is $\frac{dJ}{dg} = Z(F,X)f=0$. The next
result shows that this condition is equivalent to the first-order
optimality conditions of Theorem \ref{thm:opt_feedback} without
sparsity constraints.

\begin{corollary}\textbf{(Equivalence of first-order optimality conditions)}
  Let $\F = 1_{m\times n}$. Then, the first-order optimality condition
  $Z(\hat{F},\hat{X})\hat{f} = 0$ of \eqref{eq:opt_cost_unsp2} is
  equivalent to \eqref{eq:opt_cond1}-\eqref{eq:left_eigvec}, where
\begin{align} \label{eq:l_unsp}
\hat{l}\triangleq \text{vec}(\hat{L}) = \tilde{A}^{-\trans}(\hat{X}^{-1}\!\otimes \hat{F}^{\trans})\hat{f}.
\end{align}
\end{corollary}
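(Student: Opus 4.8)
The plan is to treat the defining relation $\hat{l} = \tilde{A}^{-\trans}(\hat{X}^{-1}\otimes \hat{F}^{\trans})\hat{f}$ as a dictionary that translates the vectorized stationarity condition $Z(\hat{F},\hat{X})\hat{f}=0$ into the matrix optimality conditions, and to establish the equivalence by reading the same two computations forward and backward. First I would note that $(\hat{F},\hat{X})$ automatically satisfies $A\hat{X}-\hat{X}\Lambda+B\hat{F}\hat{X}=0$ by construction of the Sylvester parametrization \eqref{eq:sylves}--\eqref{eq:F_G}, which is exactly \eqref{eq:right_eigvec}; so that equation requires no argument and only \eqref{eq:opt_cond1} and \eqref{eq:left_eigvec} remain to be matched.

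For \eqref{eq:opt_cond1}, I would substitute the definition of $\hat{l}$ into $Z(\hat{F},\hat{X})\hat{f}$, collapsing it to $(\hat{X}^{-1}\otimes I_m)\hat{f}+(I_n\otimes B^{\trans})\hat{l}$. Applying the vectorization identity \ref{prop:vec2} to each term gives $(\hat{X}^{-1}\otimes I_m)\hat{f}=\text{vec}(\hat{F}\hat{X}^{-\trans})$ and $(I_n\otimes B^{\trans})\hat{l}=\text{vec}(B^{\trans}\hat{L})$, so $Z(\hat{F},\hat{X})\hat{f}=\text{vec}(\hat{F}\hat{X}^{-\trans}+B^{\trans}\hat{L})$. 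Setting this to zero and multiplying on the right by $\hat{X}^{\trans}$ yields $\hat{F}=-B^{\trans}\hat{L}\hat{X}^{\trans}$, which is \eqref{eq:opt_cond1} with $\F=1_{m\times n}$.

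For \eqref{eq:left_eigvec}, I would instead unvectorize the defining relation itself. Writing $\tilde{A}^{\trans}=(I_n\otimes A^{\trans})-(\Lambda\otimes I_n)$ via \ref{prop:kron} and applying \ref{prop:vec2} term by term turns $\tilde{A}^{\trans}\hat{l}=(\hat{X}^{-1}\otimes \hat{F}^{\trans})\hat{f}$ into the adjoint Sylvester equation $A^{\trans}\hat{L}-\hat{L}\Lambda^{\trans}=\hat{F}^{\trans}\hat{F}\hat{X}^{-\trans}$, which is well posed because Assumption \textbf{A4} makes $\tilde{A}$ invertible. Using \eqref{eq:opt_cond1} in the form $B^{\trans}\hat{L}=-\hat{F}\hat{X}^{-\trans}$ rewrites the right-hand side as $-\hat{F}^{\trans}B^{\trans}\hat{L}$, and rearranging gives $(A+B\hat{F})^{\trans}\hat{L}=\hat{L}\Lambda^{\trans}$, namely \eqref{eq:left_eigvec}. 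The converse direction reuses these computations in reverse: assuming \eqref{eq:opt_cond1}--\eqref{eq:left_eigvec}, I would vectorize \eqref{eq:left_eigvec} (again through \ref{prop:vec2}) to recover the defining formula for $\hat{l}$ by invertibility of $\tilde{A}$, and then read $Z(\hat{F},\hat{X})\hat{f}=\text{vec}(\hat{F}\hat{X}^{-\trans}+B^{\trans}\hat{L})=0$ directly from \eqref{eq:opt_cond1}.

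I expect the only genuine difficulty to be bookkeeping rather than conceptual: keeping the order of factors and the transposes consistent across repeated applications of \ref{prop:vec2} and \ref{prop:kron}, and exploiting that $\Lambda$ is diagonal so $\Lambda$ and $\Lambda^{\trans}$ may be interchanged where it simplifies the algebra. The essential observation is simply that the definition of $\hat{l}$ is the vectorized left-eigenvector (adjoint Sylvester) relation, so the gradient $Z(\hat{F},\hat{X})\hat{f}$ vanishes precisely when the feedback formula \eqref{eq:opt_cond1} holds.
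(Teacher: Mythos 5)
Your proposal is correct and takes essentially the same route as the paper: \eqref{eq:right_eigvec} is automatic from the Sylvester parametrization, \eqref{eq:opt_cond1} is exactly the unvectorization of $Z(\hat{F},\hat{X})\hat{f} = \text{vec}(\hat{F}\hat{X}^{-\trans}+B^{\trans}\hat{L}) = 0$, and \eqref{eq:left_eigvec} follows by combining the definition \eqref{eq:l_unsp} with \eqref{eq:opt_cond1}. The only presentational difference is that you unvectorize \eqref{eq:l_unsp} into the matrix equation $A^{\trans}\hat{L}-\hat{L}\Lambda^{\trans}=\hat{F}^{\trans}\hat{F}\hat{X}^{-\trans}$ and then substitute, whereas the paper stays in vectorized form and factors the left side of \eqref{eq:left_eigvec} as $(I_n\otimes \hat{F}^{\trans})Z(\hat{F},\hat{X})\hat{f}$ --- the same algebra read in the opposite direction.
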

\begin{proof}
The optimality condition \eqref{eq:right_eigvec} follows from \eqref{eq:sylves}-\eqref{eq:F_G}. Equation \eqref{eq:opt_cond1} can be rewritten as $\hat{F}\hat{X}^{-\trans} + B^{\trans}\hat{L}=0$ and its vectorization using \ref{prop:vec1} yields $Z(\hat{F},\hat{X})\hat{f} = 0$. Finally, vectorization of the left side of \eqref{eq:left_eigvec} yields
\begin{align*}
  \text{vec}[&(A+B\hat{F})^{\trans}\hat{L}-\hat{L}\Lambda^{\trans}] = \text{vec}[A^{\trans}\hat{L}-\hat{L}\Lambda^{\trans} + (B\hat{F})^{\trans}L] \\
             &\overset{\ref{prop:vec1},\ref{prop:kron}}{=} \tilde{A}^{\trans}\hat{l} + (I_n\otimes (B\hat{F})^{\trans})\hat{l} \\
             &\overset{\eqref{eq:l_unsp}}{=} (\hat{X}^{-1}\!\otimes \hat{F}^{\trans})\hat{f} + (I_n\otimes (B\hat{F})^{\trans})\tilde{A}^{-\trans}(\hat{X}^{-1}\!\otimes \hat{F}^{\trans})\hat{f} \\
             &\overset{\ref{prop:kron}}{=} (I_n\otimes \hat{F}^{\trans}) Z(\hat{F},\hat{X})\hat{f} = 0.
\end{align*}
To conclude, note that $\hat{L}$ is the right eigenvector matrix.
\end{proof}

Using Lemma \ref{lem:unsp_grad_Hess}, we next present a steepest/Newton descent algorithm to solve the non-sparse MGEAP \eqref{eq:opt_cost_unsp2} \cite{DGL-YY:08}. In the algorithms presented in  this section, we interchangeably use the the matrices $(G,F,X)$ and their respective vectorizations $(g,f,x)$. The conversion of a matrix to the vector (and vice-versa) is not specifically stated in the steps of the algorithms and is assumed wherever necessary. 
\begin{algorithm} \label{algo:unsp_des}
  \KwIn{$A,B,\Lambda,G_0.$}
  \KwOut{Local minimum $(\hat{F},\hat{X})$ of \eqref{eq:opt_cost_unsp2}.}
  \BlankLine
  \textbf{Initialize:} $G_0,X_0\leftarrow$ Solution of \eqref{eq:sylves}, $F_0  \leftarrow G_0X_0^{-1}$ \\
 \Repeat{\textup{convergence}}
 {\nl $\alpha \leftarrow$ Compute step size (see below)\label{line:step_size}\;
 \nl  $g \leftarrow  g - \alpha Z(F,X)f\quad$ \textbf{or}\label{line:grad_des}\;
 \nl $g \leftarrow  g - \alpha [H(F,X)+V(F,X)]^{-1}Z(F,X)f$\label{line:newton_des}\;
 \vspace{4pt}
 $X \leftarrow$ Solution of Sylvester equation \eqref{eq:sylves}\;
 $F \leftarrow GX^{-1}$\;
 }
 \Return{$(F,X)$}
  \caption{Non-sparse solution to the MGEAP}
\end{algorithm}

Steps \ref{line:grad_des} and $\ref{line:newton_des}$ of Algorithm
\ref{algo:unsp_des} represent the steepest and (damped) Newton descent
steps, respectively. Since, in general, the Hessian $H(F,X)$ is not
positive-definite, the Newton descent step may not result in a
decrease of the cost. Therefore, we add a Hermitian matrix $V(F,X)$ to
the Hessian to make it positive definite \cite{DGL-YY:08}. We will
comment on the choice of $V(F,X)$ in Section \ref{sec:num_study}. In
step \ref{line:step_size}, the step size $\alpha$ can be determined by
backtracking line search or Armijo's rule \cite{DGL-YY:08}. For a
detailed discussion of the steepest/Newton descent methods, the reader
is referred to \cite{DGL-YY:08}. The computationally intensive steps
in Algorithm \ref{algo:unsp_des} are solving the Sylvester equation
\eqref{eq:sylves} and evaluating the inverses of $X$ and $H+V$. Note
that the expression of the gradient in \eqref{eq:grad_unsp} is similar
to the expression provided in \cite{AV:00a}. However, the expression
of Hessian in \eqref{eq:Hess_unsp} is new and it allows us to
implement Newton descent whose convergence is considerably faster than
steepest descent.


Next, we present a relaxation of the optimization problem \eqref{eq:opt_cost} and a corresponding algorithm that provides approximately-sparse solutions to the MGEAP. We remove the explicit feedback sparsity constraints \eqref{eq:spar_const} and modify the cost function to penalize it when these sparsity constraints are violated. Using the Sylvester equation based parametrization, the relaxed optimization problem is stated as: 
\begin{align} \label{eq:opt_cost_pen}
\underset{G}{\min} &\quad J_{W} =  \frac{1}{2}\:||W\circ F||_F^2  \hspace{10pt}
\end{align}
\ajustspaceandequationnumber
\begin{subequations}
\begin{align*} 
\text{s.t.}\quad \eqref{eq:sylves} \:\: \text{and} \:\: \eqref{eq:F_G} \:\: \text{hold true,}
\end{align*}
\end{subequations}
where $W\in\mathbb{R}^{m\times n}$ is a weighing matrix that penalizes the cost for violation of sparsity constraints, and is given by 
\begin{align*}
W_{ij} = 
\begin{cases}
1  \quad &\text{if} \:\:\  \F_{ij} = 1, \:\text{and} \\
\gg 1 \quad &\text{if} \:\:\  \F_{ij} = 0.
\end{cases}
\end{align*}
As the penalty weights of $W$ corresponding to the sparse entries of $F$ increase, an optimal solution of \eqref{eq:opt_cost_pen} becomes more sparse and approaches towards the optimal solution of $\eqref{eq:opt_cost}$. Note that the relaxed problem \eqref{eq:opt_cost_pen} corresponds closely to the non-sparse MGEAP \eqref{eq:opt_cost_unsp2}. Thus, we use a similar gradient based approach to obtain its solution. 
\begin{lemma} \textbf{(Gradient and Hessian of $\mathbf{J_W}$)} \label{lem:grad_hess_approx}
The gradient  and Hessian of the cost $J_W$ in \eqref{eq:opt_cost_pen} with respect to $g$ is given by
\begin{align} \label{eq:grad_pen}
\frac{dJ_{W}}{dg}&=Z(F,X) \bar{W}f , \\ 
\frac{d^{2}J_W}{d^{2}g} &\triangleq H_W(F,X) = Z(F,X)\bar{W}Z^{\trans}\!(F,X) \nonumber\\
 + Z_{1,W}(F,X)&Z^{\trans}(F,X) +  Z(F,X)Z_{1,W}^{\trans}(F,X),\label{eq:Hess_pen} \\
\text{where} \quad \bar{W} &\triangleq \text{diag}(\text{vec}(W\circ W)) \quad \text{and,} \nonumber \\
Z_{1,W}(F,X)\triangleq &(I_n\!\otimes\! B^{\trans}) \tilde{A}^{-\trans} (X^{-1}(W\!\circ\! W \!\circ\! F)^{\trans}\!\otimes\! I_n) T_{m,n}.\nonumber 
\end{align}
\end{lemma}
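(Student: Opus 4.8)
The plan is to mirror the derivation of Lemma~\ref{lem:unsp_grad_Hess} almost line for line, since the constraints \eqref{eq:sylves}--\eqref{eq:F_G} are identical and only the cost has changed. The first step is to recast $J_W$ as a quadratic form in $f$. Using \ref{prop:frob} together with \ref{prop:had2} to write $\text{vec}(W\circ F)=\text{vec}(W)\circ f$, I would obtain
$$J_W=\tfrac{1}{2}(\text{vec}(W)\circ f)^{\trans}(\text{vec}(W)\circ f)=\tfrac{1}{2}f^{\trans}\bar{W}f,$$
where $\bar{W}=\text{diag}(\text{vec}(W\circ W))$ is exactly the diagonal weight in the statement. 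This reduces the task to differentiating $\tfrac{1}{2}f^{\trans}\bar{W}f$ subject to the same Sylvester constraint as before.

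Second, I would reuse the two differentials already established in the proof of Lemma~\ref{lem:unsp_grad_Hess}, namely $dx=-\tilde{A}^{-1}(I_n\otimes B)\,dg$ from \eqref{eq:dx_dg} and $df=Z^{\trans}(F,X)\,dg$ from \eqref{eq:df_dg}; neither depends on the cost, so both carry over verbatim. The gradient \eqref{eq:grad_pen} then follows immediately, since $dJ_W=f^{\trans}\bar{W}\,df=f^{\trans}\bar{W}Z^{\trans}\,dg$ gives $dJ_W/dg=Z\bar{W}f$ by \ref{prop:grad_Hess}.

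Third, for the Hessian I would take the second differential $d^{2}J_W=(df)^{\trans}\bar{W}\,df+f^{\trans}\bar{W}\,d^{2}f$. The first summand produces $dg^{\trans}Z\bar{W}Z^{\trans}\,dg$ directly. For the second, I would reuse $d^{2}f=-2(X^{-\trans}\otimes dF)\,dx$ from \eqref{eq:d2f}. The one genuinely new manipulation is to absorb the weight: the identity $\bar{W}f=\text{vec}(W\circ W\circ F)$ (from \ref{prop:had2}, \ref{prop:had1}) means that wherever $F$ appears in the unweighted calculation it is simply replaced by $W\circ W\circ F$. Propagating this through the same \ref{prop:vec2}/\ref{prop:kron}/$T_{m,n}$ steps converts $\text{vec}((dF)^{\trans}FX^{-\trans})$ into $\text{vec}((dF)^{\trans}(W\circ W\circ F)X^{-\trans})=(X^{-1}(W\circ W\circ F)^{\trans}\otimes I_n)T_{m,n}\,df$. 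Substituting $dx$ and $df$ then yields precisely the factor $Z_{1,W}(F,X)$, and symmetrizing the resulting cross term (using $dg^{\trans}M\,dg=dg^{\trans}M^{\trans}\,dg$) gives $Z_{1,W}Z^{\trans}+ZZ_{1,W}^{\trans}$, so that \eqref{eq:Hess_pen} follows via \ref{prop:grad_Hess}.

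The main obstacle is purely bookkeeping rather than conceptual: one must correctly track $\bar{W}$ through the chain of vectorization identities, the key point being the recognition that $\bar{W}f=\text{vec}(W\circ W\circ F)$, which is what funnels the weight into the $W\circ W\circ F$ term of $Z_{1,W}$, together with the symmetrization of the Hessian cross term. As a consistency check, setting $W=1_{m\times n}$ gives $\bar{W}=I_{mn}$ and $W\circ W\circ F=F$, so $Z_{1,W}=Z_1$ and the formulas collapse back to \eqref{eq:grad_unsp}--\eqref{eq:Hess_unsp}, confirming that the weighted result is an exact echo of Lemma~\ref{lem:unsp_grad_Hess}.
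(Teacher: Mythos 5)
Your proposal is correct and matches the paper's own proof essentially line for line: the paper likewise reuses the cost-independent differentials \eqref{eq:dx_dg}--\eqref{eq:d2f} from Lemma~\ref{lem:unsp_grad_Hess}, rewrites the cost as $J_W = \tfrac{1}{2}f^{\trans}\bar{W}f$ via \ref{prop:frob} and \ref{prop:had2}, and funnels the weight through the identity $\bar{W}f = \text{vec}(W\circ W\circ F)$, leaving the rest to the same vectorization steps. The only difference is presentational: the paper states this compactly and defers the bookkeeping to ``similar to the proof of Lemma~\ref{lem:unsp_grad_Hess},'' whereas you spell it out (and add the useful sanity check $W = 1_{m\times n}$).
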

\begin{proof}
  Since the constraints of problems \eqref{eq:opt_cost_unsp2} and
  \eqref{eq:opt_cost_pen} coincide, Equations
  \eqref{eq:dx_dg}-\eqref{eq:d2f} from Lemma \ref{lem:unsp_grad_Hess}
  also hold true for problem \eqref{eq:opt_cost_pen}. Now,
  $J_{W} \overset{\ref{prop:frob},\ref{prop:had2}}{=} \frac{1}{2}
  (\text{vec}(W)\circ f)^{\trans}(\text{vec}(W)\circ f) =\frac{1}{2}
  f^{\trans}\bar{W}f$. Thus, $dJ_{W} = f^{\trans}\bar{W}df$ and
  $d^{2}J_{W} = (df)^{\trans}\bar{W}df +
  f^{\trans}\bar{W}d^{2}f$. Using the relation
  $\text{vec}(W\circ W \circ F) = \bar{W}f$, the remainder of the
  proof is similar to proof of Lemma \ref{lem:unsp_grad_Hess}.
\end{proof}
Using Lemma \ref{lem:grad_hess_approx}, we next present an algorithm to obtain an approximately-sparse solution to the MGEAP.

\begin{algorithm} \label{algo:unsp_des_pen}
  \KwIn{$A,B,\Lambda,W, G_0$}
  \KwOut{Local minimum $(\hat{F},\hat{X})$ of \eqref{eq:opt_cost_pen}.}
  \BlankLine
  \textbf{Initialize:} $G_0,X_0\leftarrow$ Solution of \eqref{eq:sylves}, $F_0  \leftarrow G_0X_0^{-1}$\\
 \Repeat{\textup{convergence}}
 {\nl $\alpha\leftarrow$ Update step size\;
 \nl  $g \leftarrow  g - \alpha Z(F,X)\bar{W}f\quad$ \textbf{or}\label{line:grad_des3}\; 
 \nl $g \leftarrow  g - \alpha [H_{W}(F,X)+V_{W}(F,X)]^{-1}Z(F,X)\bar{W}f$\label{line:newton_des3}\;
 \vspace{4pt}
 $X \leftarrow$ Solution of Sylvester equation \eqref{eq:sylves}\;
 $F \leftarrow GX^{-1}$\;
 }
 \Return{$(F,X)$}
  \caption{Approximately-sparse solution to the \hspace*{65pt} MGEAP}
\end{algorithm}
The step size rule and modification of the Hessian in Algorithm \ref{algo:unsp_des_pen} is similar to Algorithm \ref{algo:unsp_des}.

\subsection{Algorithms for the sparse EAP}
In this subsection, we present two heuristic algorithms to obtain a
sparse solution to the EAP (not necessarily minimum-gain). This
involves finding a pair $(F,X)$ which satisfies the eigenvalue
assignment and sparsity constraints \eqref{eq:eigv_assgn},
\eqref{eq:spar_const}. We begin with a result that combines these two
constraints.

\begin{lemma} \textbf{(Feasibility of $\mathbf{(F,X)}$)}\label{lem:fx_exist_cond}
An invertible matrix $X\in\mathbb{R}^{n \times n}$ satisfies \eqref{eq:eigv_assgn} and \eqref{eq:spar_const} if and only if 
\begin{align}  \label{eq:spEAP_cond1}
\tilde{a}(x) &\in \mc{R}(\tilde{B}(X)) \quad \text{where,}\\ \nonumber
 \tilde{a}(x) \triangleq \tilde{A}x, \tilde{B}(X) &\triangleq  - (X^{\trans}\otimes B)P_{\F}, P_{\F} \triangleq \text{diag}(\text{vec}(\F)).
 \end{align}
Further, if \eqref{eq:spEAP_cond1} holds true, then the set of sparse feedback matrices that satisfy \eqref{eq:eigv_assgn} and \eqref{eq:spar_const} is given by 
\begin{align} \label{eq:sp_feedback_set}
\mc{F}_{X} =  \{P_{\F} f_{ns}: \tilde{B}(X)f_{ns} = \tilde{a}(x), f_{ns}\in\real^{mn}\}.
 \end{align}
\end{lemma}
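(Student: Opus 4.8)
The plan is to prove the two directions of Lemma \ref{lem:fx_exist_cond} by vectorizing the two matrix constraints \eqref{eq:eigv_assgn} and \eqref{eq:spar_const} simultaneously, and then recasting the combined system as a single linear solvability condition on the feedback vector. The key observation is that the eigenvalue assignment constraint $(A+BF)X = X\Lambda$ is linear in $F$ once $X$ is fixed, and the sparsity constraint simply forces certain entries of $f = \text{vec}(F)$ to vanish. My goal is to eliminate the sparse entries of $f$ explicitly using the selection matrix $P_{\F}$, so that the residual solvability condition becomes a range condition.

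First I would rewrite \eqref{eq:eigv_assgn} as $AX - X\Lambda + BFX = 0$ and vectorize it. Using property \ref{prop:vec1} on $AX - X\Lambda$ (as already done in \eqref{eq:eigv_assgn_vec1}--\eqref{eq:eigv_assgn_vec2}) gives $\tilde{A}x$ with $\tilde{A} = A \oplus (-\Lambda^{\trans})$, and vectorizing $BFX$ via \ref{prop:vec2} yields $(X^{\trans} \otimes B)f$. Hence \eqref{eq:eigv_assgn} is equivalent to $\tilde{A}x + (X^{\trans}\otimes B)f = 0$, i.e. $(X^{\trans}\otimes B)f = -\tilde{a}(x)$. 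Next I would encode the sparsity constraint \eqref{eq:spar_const}: writing $f = P_{\F}f + P_{\F^c}f$ where $P_{\F^c} = \text{diag}(\text{vec}(\F^c)) = I_{mn} - P_{\F}$, the constraint $\F^c \circ F = 0$ is exactly $P_{\F^c}f = 0$, so that any admissible $f$ satisfies $f = P_{\F}f$. Substituting this into the vectorized assignment constraint gives $(X^{\trans}\otimes B)P_{\F}f = -\tilde{a}(x)$, and recognizing $\tilde{B}(X) = -(X^{\trans}\otimes B)P_{\F}$ turns this into $\tilde{B}(X)f = \tilde{a}(x)$.

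With this reduction in hand, both claims follow readily. For the equivalence: $X$ satisfies \eqref{eq:eigv_assgn} and \eqref{eq:spar_const} for \emph{some} $F$ if and only if the linear system $\tilde{B}(X)f = \tilde{a}(x)$ has a solution $f$ satisfying $f = P_{\F}f$; but since $P_{\F}$ is idempotent and $\tilde{B}(X)P_{\F} = \tilde{B}(X)$, solvability of the restricted system is equivalent to plain solvability, which is precisely the range condition $\tilde{a}(x) \in \mc{R}(\tilde{B}(X))$. For the characterization of $\mc{F}_X$: any solution $f_{ns}$ of $\tilde{B}(X)f_{ns} = \tilde{a}(x)$ (without sparsity restriction) produces an admissible sparse feedback by projecting onto the sparse entries, namely $f = P_{\F}f_{ns}$, since $\tilde{B}(X)P_{\F}f_{ns} = \tilde{B}(X)f_{ns} = \tilde{a}(x)$ ensures assignment holds and $P_{\F}f_{ns}$ is automatically sparse; conversely every admissible sparse $f$ is trivially of this form with $f_{ns} = f$. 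This gives \eqref{eq:sp_feedback_set}.

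The main obstacle, though minor, is the careful bookkeeping of the two vectorization identities and verifying the idempotence relation $\tilde{B}(X)P_{\F} = \tilde{B}(X)$, which hinges on $P_{\F}^2 = P_{\F}$; I would state this explicitly to make the reduction airtight. A subtle point worth flagging is that the characterization \eqref{eq:sp_feedback_set} describes the same sparse solution set $\mc{F}_X$ regardless of which (generally non-sparse) particular solution $f_{ns}$ is chosen, so I would note that $P_{\F}$ acts as a projection selecting exactly the free (non-constrained) entries, which is what makes the set-builder description in \eqref{eq:sp_feedback_set} well-posed.
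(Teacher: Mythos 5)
Your proposal is correct and follows essentially the same route as the paper's proof: vectorize the assignment constraint via properties \ref{prop:vec1}--\ref{prop:vec2}, encode sparsity through the diagonal selector $P_{\F}$ (the paper writes any sparse $f$ as $f = P_{\F}f_{ns}$, you equivalently write $f = P_{\F}f$), and read off the range condition $\tilde{a}(x)\in\mc{R}(\tilde{B}(X))$ together with the set \eqref{eq:sp_feedback_set}. Your write-up is merely more explicit than the paper's about the two directions of the equivalence and the idempotence identity $\tilde{B}(X)P_{\F}=\tilde{B}(X)$, which the paper leaves implicit.
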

\begin{proof}
Any feedback $f$ which satisfies the sparsity constraint \eqref{eq:spar_const} can be written as $f = P_{\F}f_{ns}$ where $f_{ns}\in\mathbb{R}^{mn}$ is a non-sparse vector\footnote{Since $f$ satisfies \eqref{eq:spar_const_vec}, it can also be characterized as $f = (I_{mn}-Q^{+}Q)f_{ns}$, and thus $P_{\F} =I_{mn}-Q^{+}Q$.}. Vectorizing \eqref{eq:eigv_assgn} using \ref{prop:vec1} and \ref{prop:vec2}, and substituting $f = P_{\F}f_{ns}$, we get
 \begin{align}  \label{eq:spEAP_cond2}
\tilde{A}x = - (X^{\trans}\otimes B)P_{\F}f_{ns},
\end{align}
from which \eqref{eq:spEAP_cond1} and \eqref{eq:sp_feedback_set} follow.
\end{proof}

Based on Lemma \ref{lem:fx_exist_cond}, we develop a heuristic
algorithm for a sparse solution to the EAP. The algorithm starts with
a non-sparse EAP solution $(F_0,X_0)$ that does not satisfy
\eqref{eq:spar_const} and \eqref{eq:spEAP_cond1}. Then, it takes
repeated projections of $\tilde{a}(x)$ on $\mathcal{R}(\tilde{B}(X))$
to update $X$ and $F$, until a sparse solution is obtained.

\begin{algorithm} \label{algo:sp_EAP1}
  \KwIn{$A,B,\Lambda,\F,G_0,iter_{max}.$}
  \KwOut{$(F,X)$ satisfying \eqref{eq:eigv_assgn} and \eqref{eq:spar_const}.}
  \BlankLine
  \textbf{Initialize:} $G_0$, $X_0\!\!\leftarrow$ Solution of \eqref{eq:sylves}, $F_{ns,0} \leftarrow G_0X_0^{-1}$, $i\leftarrow 0$ \\
 \Repeat{\textup{convergence or} $i > iter_{max}$}
 { \nl $\tilde{a}(x) \leftarrow \tilde{B}(X)[\tilde{B}(X)]^{+}\tilde{a}(x)$\label{line:EAP1_1}\;
   \vspace{2pt}
   \nl $x \leftarrow  \tilde{A}^{-1} \tilde{a}(x) $\label{line:EAP1_2}\;
   \nl $X \leftarrow$ Normalize $X$\label{line:EAP1_normalize}\;
   $i \leftarrow i + 1\;$
 }
 \Return{$(f \in \mc{F}_X \textup{ in } \eqref{eq:sp_feedback_set},X)$}
  \caption{Sparse solution to EAP}
\end{algorithm}
In step \ref{line:EAP1_1} of Algorithm \ref{algo:sp_EAP1}, we update
$\tilde{a}(x)$ by projecting it on $\mathcal{R}(\tilde{B}(X))$. Step
\ref{line:EAP1_2} computes $x$ from $\tilde{a}(x)$ using the fact that
$\tilde{A}$ is invertible (c.f. Assumption \ref{assump:open_closed_eigv_disjoint}). Finally, the
normalization in step \ref{line:EAP1_normalize} is performed to ensure
invertibility of $X$\footnote{Since $X$ is not an eigenvector matrix,
  we compute the eigenvectors from $X$, normalize them,
  and then recompute real $X$.}.

Next, we develop a second heuristic algorithm for solving the sparse EAP problem using the non-sparse MGEAP solution in Algorithm \ref{algo:unsp_des}. The algorithm starts with a non-sparse EAP solution $(F_0,X_0)$. In each iteration, it sparsifies the feedback to obtain $f = P_{\F}f_{ns}$ (or $F=\F\circ F_{ns}$), and then solves the following non-sparse MGEAP  
\begin{align} \label{eq:opt_cost_unsp1}
\underset{F_{ns},X}{\min} &\quad \frac{1}{2}\:||F_{ns}-F||_F^2 \hspace{27pt}
\end{align}
\ajustspaceandequationnumber
\begin{subequations}
\begin{align}  \label{eq:eigv_assgn_unsp1}
\text{s.t.}& \quad (A+BF_{ns})X=X\Lambda,
\end{align}
\end{subequations}
to update $F_{ns}$ that is \emph{close} to the sparse $F$. This
algorithm resembles to the alternating projection method
\cite{RE-MR:11} to find an intersection point of two sets. The
operation $F=\F\circ F_{ns}$ computes the projection of $F_{ns}$ on
the convex set of sparse feedback matrices. The optimization problem
\eqref{eq:opt_cost_unsp1} computes the projection of $F$ on the
non-convex set of feedback matrices that assign the eigenvalues.
Thus, using the heuristics of repeated sparsification of the solution
of non-sparse MGEAP in \eqref{eq:opt_cost_unsp1}, the algorithm
obtains a sparse solution. The alternating projection method is not
guaranteed to converge in general when the sets are not
convex. However, if the starting point is \emph{close} to the two
sets, convergence is guaranteed \cite{SAL-JM:08}.

Note that a solution $\hat{F}_{ns}$ of the problem
\eqref{eq:opt_cost_unsp1} with parameters $(A,B,\Lambda,F)$ satisfies
$\hat{F}_{ns} = F + \hat{K}_{ns}$, where $\hat{K}_{ns}$ is a solution
of the optimization problem \eqref{eq:opt_cost_unsp2} with parameters
$(A+BF,B,\Lambda)$. Thus, we can use Algorithm \ref{algo:unsp_des} to
solve \eqref{eq:opt_cost_unsp1}.

\begin{algorithm} \label{algo:sp_EAP2}
  \KwIn{$A,B,\Lambda,\F,G_0, iter_{max}.$}
  \KwOut{$(F,X)$ satisfying \eqref{eq:eigv_assgn} and \eqref{eq:spar_const}.}
  \BlankLine
 \textbf{Initialize:} $G_0$, $X_0\leftarrow$ Solution of \eqref{eq:sylves}, $F_{ns,0}  \leftarrow G_0X_0^{-1}$,  $i\leftarrow 0$ \\
 \Repeat{\textup{convergence or} $i >iter_{max}$}
 { $F \leftarrow\F\circ F_{ns}$\;
   \nl $(K_{ns},X) \leftarrow \text{Algorithm \ref{algo:unsp_des}} (A+BF,B,\Lambda)$\label{line:sp_EAP2_2}\;
   $F_{ns}\leftarrow F+K_{ns}$\;
   $i \leftarrow i + 1$\;
 }
 \Return{$(F,X)$}
  \caption{Projection-based sparse solution to EAP}
\end{algorithm}

\begin{remark} \textbf{(Comparison of EAP Algorithms \ref{algo:sp_EAP1} and \ref{algo:sp_EAP2})} \label{rem:good_proj}

\noindent \emph{1. Projection property:} In general, Algorithm \ref{algo:sp_EAP1} results in a sparse EAP solution $F$ that is considerably different from the initial non-sparse $F_{ns,0}$. In contrast, Algorithm \ref{algo:sp_EAP2} provides a sparse solution $F$ that is \emph{close} to $F_{ns,0}$. This is due to the fact that Algorithm \ref{algo:sp_EAP2} updates the feedback by solving the optimization problem \eqref{eq:opt_cost_unsp1}, which minimizes the deviations between successive feedback matrices. Thus, Algorithm \ref{algo:sp_EAP2} provides a good (although not necessarily orthogonal) projection of a given non-sparse EAP solution $F_{ns,0}$ on the space of sparse EAP solutions. 

\noindent \emph{2. Complexity:} The computational complexity of Algorithm \ref{algo:sp_EAP2} is considerably larger than that of Algorithm \ref{algo:sp_EAP1}. This is because Algorithm \ref{algo:sp_EAP2} requires a solution of a non-sparse MGEAP problem in each iteration. In contrast,  Algorithm \ref{algo:sp_EAP1} only requires projections on the range space of a matrix in each iteration. Thus, Algorithm \ref{algo:sp_EAP1} is considerably faster as compared to Algorithm \ref{algo:sp_EAP2}.

\noindent \emph{3. Convergence:}  Although we do not formally prove the convergence of heuristic Algorithms \ref{algo:sp_EAP1} and \ref{algo:sp_EAP2} in this paper, a comprehensive simulation study in Subsection \ref{subsec:Num_study} suggests that Algorithm \ref{algo:sp_EAP2}  converges in almost all instances. In contrast, Algorithm \ref{algo:sp_EAP1} converges in much fewer instances and its convergence deteriorates considerably as the number of sparsity constraints increase (see Subsection \ref{subsec:Num_study}). \oprocend
 \end{remark}  
 
 If the starting point $F_{ns,0}$ of Algorithm 4 is ``sufficiently
 close'' to a local minima $\hat{F}$ of \eqref{eq:opt_cost}, then its
 iterations will converge (heuristically) to $\hat{F}$. In this case,
 Algorithm 4 can be used to solve the sparse MGEAP. However,
 convergence to $\hat{F}$ is not guaranteed for an arbitrary starting
 point.

\subsection{Algorithm for sparse MGEAP}
In this subsection, we present an iterative projected gradient algorithm to compute the sparse solutions of the MGEAP in \eqref{eq:opt_cost}. The algorithm consists of two loops. The outer loop is same as the non-sparse MGEAP Algorithm \ref{algo:unsp_des} (using steepest descent) with an additional projection step, which constitutes the inner loop. Figure \ref{fig:single_iter} represents one iteration of the algorithm. First, the gradient $Z(F_k,X_k)f_k$ is computed at a current point $G_k$ (equivalently $(F_k,X_k)$, where $F_k$ is sparse). Next, the gradient is projected on the tangent plane of the sparsity constraints \eqref{eq:spar_const_vec}, which is given by
\begin{align}
\mc{T}_F &= \left\{y\in\mathbb{R}^{mn}: \left[\frac{d(Qf)}{dg}\right]^{\trans} y = 0\right\} \nonumber \\
&  = \left\{y\in\mathbb{R}^{mn}:QZ^{\trans}(F,X)y = 0\right\}.
\end{align}
From \ref{prop:projection}, the projection of the gradient on $\mc{T}_F$ is given by $P_{F_k}Z(F_k,X_k)f_k$, where $P_{F_k} = I_{mn} - [QZ^{\trans}(F_k,X_k)]^{+}[QZ^{\trans}(F_k,X_k)]$. Next, a move is made in the direction of the projected gradient to obtain $G_{ns,k} (F_{ns,k}, X_{ns,k})$. Finally, the orthogonal projection of $G_{ns,k}$ is taken on the space of sparsity constraints to obtain $G_{k+1} (F_{k+1},X_{k+1})$. This orthogonal projection is equivalent to solving \eqref{eq:opt_cost_unsp1} with sparsity constraints \eqref{eq:spar_const}, which in turn is equivalent to the original sparse MGEAP \eqref{eq:opt_cost}. Thus, the orthogonal projection step is as difficult as the original optimization problem. To address this issue, we use the heuristic Algorithm \ref{algo:sp_EAP2} to compute the projections. Although the projections obtained using Algorithm \ref{algo:sp_EAP2} are not necessarily orthogonal, they are typically good (c.f. Remark \ref{rem:good_proj}). 

\begin{figure}[h!]
\centering
\includegraphics[width=\columnwidth]{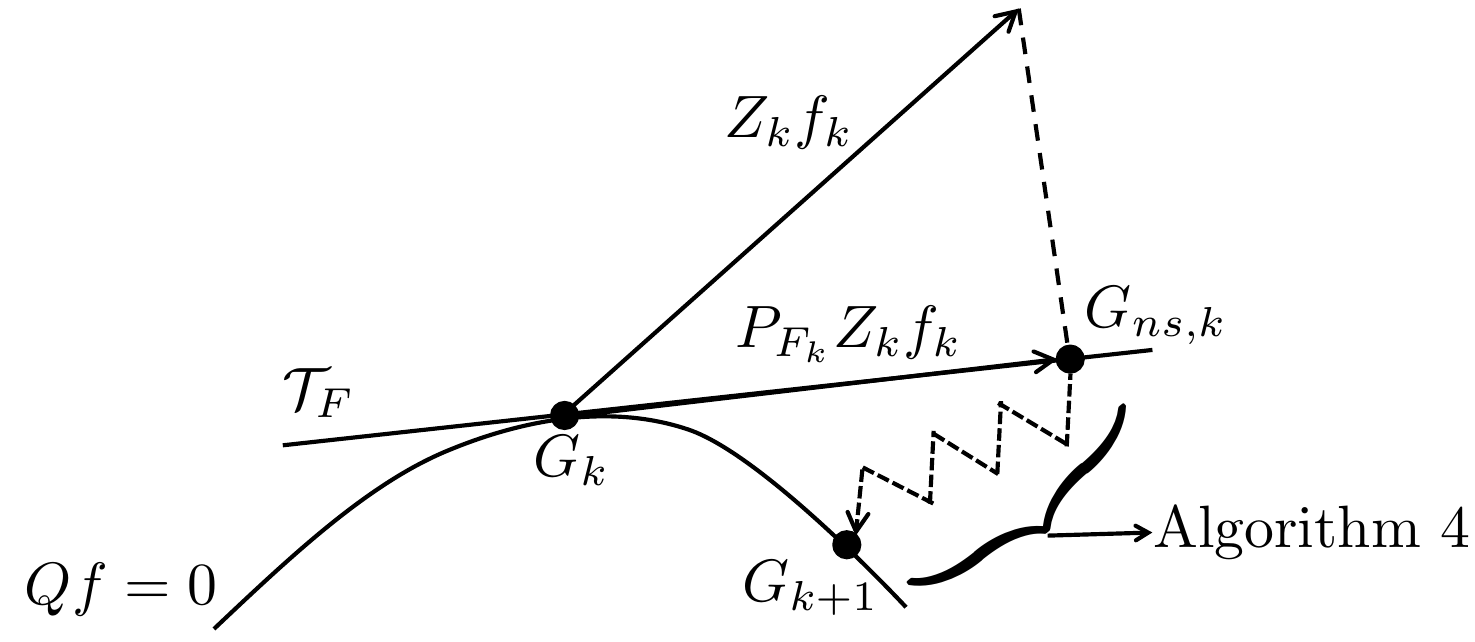}
\caption{A single iteration of Algorithm \ref{algo:sp_des}.}
\label{fig:single_iter}
\end{figure}

%

\begin{algorithm} \label{algo:sp_des}
  \KwIn{$A,B,\Lambda,\F,G_0, iter_{max}.$}
  \KwOut{Local minimum $(\hat{F},\hat{X})$ of \eqref{eq:opt_cost}.}
  \BlankLine
\textbf{Initialize:} $(F_0,X_0)\leftarrow \text{Algorithm } \ref{algo:sp_EAP2} (A,B,\Lambda,\F,G_0,iter_{max})$, $G_0 \leftarrow F_0X_0$, $i \leftarrow 0$  \\
 \Repeat{\textup{convergence or} $i > iter_{max}$}
{ $\alpha \leftarrow$ Update step size\; 
   $g_{ns} \leftarrow  g - \alpha P_F Z(F,X) f$\; 
 $X_{ns} \leftarrow$ Solution of Sylvester equation \eqref{eq:sylves}\;
 $F_{ns} \leftarrow G_{ns}X_{ns}^{-1}$\;
\nl $(F,X) \leftarrow \text{Algorithm \ref{algo:sp_EAP2}} (A,B,\Lambda,\F,G_{ns},iter_{max})$\label{line:sp_MGEAP1}\;
$G\leftarrow FX$\;
$i \leftarrow i + 1$\;
 }
 \Return{$(F,X)$}
  \caption{Sparse solution to the MGEAP}
\end{algorithm}
Algorithm \ref{algo:sp_des} is computationally intensive due to the use of Algorithm \ref{algo:sp_EAP2} in step \ref{line:sp_MGEAP1} to compute the projection on the space of sparse matrices. In fact, the computational complexity of Algorithm \ref{algo:sp_des} is one order higher than that of non-sparse MGEAP Algorithm \ref{algo:unsp_des}. However, a way to considerably reduce the number of iterations of Algorithm \ref{algo:sp_des} is to initialize it using the approximately-sparse solution obtained by Algorithm \ref{algo:unsp_des_pen}. In this case, Algorithm \ref{algo:sp_des} starts \emph{near} the the local minimum and, thus, its convergence time reduces considerably.

\section{Simulation studies} \label{sec:num_study} In this section, we
present the implementation details of the algorithms developed in
Section \ref{sec:algorithms} and provide numerical simulations to
illustrate their properties.

\subsection{Implementation aspects of the algorithms}
In the Newton descent step (step \ref{line:newton_des}) of Algorithm \ref{algo:unsp_des}, we need to choose (omitting the parameter dependence notation) $V$ such that $H + V$ is positive-definite. We choose $V= \delta I_{mn} - Z_1 Z^{\trans} - Z Z_1^{\trans}$ where, $0<\delta \ll 1$. Thus, from \eqref{eq:Hess_unsp}, we have: $H+V = ZZ^{\trans}+\epsilon I_{mn}$. Clearly, $ZZ^{\trans}$ is positive-semidefinite and the small additive term $\epsilon I_{mn}$ ensures that $H+V$ is positive-definite. Note that other possible choices of $V$ also exist. In step \ref{line:step_size} of Algorithm \ref{algo:unsp_des}, we use the Armijo rule to compute the step size $\alpha$. Finally, we use $\left\lVert\frac{dJ}{dg}\right\rVert_2 < \epsilon$, $0<\epsilon \ll 1$ as the convergence criteria of Algorithm \ref{algo:unsp_des}. For Algorithm \ref{algo:unsp_des_pen}, we analogously choose $V_{W} = \delta I_{mn} - Z_{1,W} Z^{\trans} - Z Z_{1,W}^{\trans}$ and the same convergence criteria and step size rule as Algorithm \ref{algo:unsp_des}. 
In both algorithms, if we encounter a scenario in which the solution $X$ of \eqref{eq:sylves} is singular (c.f. paragraph below \eqref{eq:F_G}), we perturb $G$ slightly such that the new solution is non-singular, and then continue the iterations. We remark that such instances occur extremely rarely in our simulations.


For the sparse EAP Algorithm \ref{algo:sp_EAP1}, we use the convergence criteria $e_X = \lVert [I_{n^2}- \tilde{B}(X)(\tilde{B}(X))^{+}]\tilde{a}(x) \rVert_2 < \epsilon$, $0<\epsilon \ll 1$. For Algorithm \ref{algo:sp_EAP2}, we use the convergence criteria $e_F = \lVert F-\F\circ F\rVert_F<\epsilon$, $0<\epsilon \ll 1$. Thus, the iterations of these algorithms stop when $x$ lies in a certain subspace and when the sparsity error becomes \emph{sufficiently} small (within the specified tolerance), respectively. Further, note that Algorithm \ref{algo:sp_EAP2} uses Algorithm \ref{algo:unsp_des} in step \ref{line:sp_EAP2_2} without specifying an initial condition $G_0$ for the latter. This is because in step \ref{line:sp_EAP2_2}, we effectively run Algorithm \ref{algo:unsp_des} for multiple initial conditions in order to capture its global minima. We remark that the capture of global minima by Algorithm \ref{algo:unsp_des} is crucial for convergence of Algorithm \ref{algo:sp_EAP2}.

As the iterations of Algorithm \ref{algo:sp_EAP2} progress, the sparse matrix $F$ achieves eigenvalue assignment with increasing accuracy. As a result, near the convergence of Algorithm \ref{algo:sp_EAP2}, the eigenvalues of $A+BF$ and $\Lambda$ are very close to each other. This creates numerical difficulties when Algorithm \ref{algo:unsp_des} is used with parameters $(A+BF,B,\Lambda)$ in step \ref{line:sp_EAP2_2} (see Assumption \ref{assump:open_closed_eigv_disjoint}). To avoid this issue, we run Algorithm \ref{algo:unsp_des} using a preliminary feedback $F_p$, as explained below Assumption \ref{assump:open_closed_eigv_disjoint}.

Finally, for Algorithm \ref{algo:sp_des}, we use the following
convergence criteria:
$\left\lVert P_F\frac{dJ}{dg}\right\rVert_2 < \epsilon$,
$0<\epsilon \ll 1$. We choose the stopping tolerance $\epsilon$
between $10^{-6}$ and $10^{-5}$ for all the algorithms.

\subsection{Numerical study}   \label{subsec:Num_study}
We begin this subsection with the following example:
\begin{align*}
\small A &=\begin{bmatrix}
-3.7653  & -2.1501 &   \phantom{-}0.3120 &  -0.2484 \\
    \phantom{-}1.6789  &  \phantom{-}1.0374  & -0.5306   & \phantom{-}1.3987\\
   -2.1829 &  -2.5142  & -1.2275  &  \phantom{-}0.2833 \\
  -13.6811  & -9.6804  & -0.5242   & \phantom{-}2.9554
\end{bmatrix}\!,\\ 
\small B &=  \begin{bmatrix}
1 & 1 & 2 & 5\\
1 & 3 & 4 & 2
\end{bmatrix}^{\trans}\!\!\!\!, \qquad
\F = \begin{bmatrix}
1 & 1 & 0 & 0\\
1 & 0 & 1 & 1
\end{bmatrix}\!,\\
\mc{S} &= \{-2,-1, -0.5\pm j\}.
\end{align*}

\begin{table}[h!]
\caption{Comparison of MGEAP solutions by Algorithms \ref{algo:unsp_des}, \ref{algo:unsp_des_pen} and \ref{algo:sp_des}}
\begin{center}
\setlength{\tabcolsep}{2pt}
 \begin{tabular}{| l |  }
  \hline	\\[-0.7em]		
  \textbf{Non-sparse solutions by Algorithm \ref{algo:unsp_des}} \\
  \hline  
  \rule{0pt}{21pt}  $\hat{X}_1 =
  \begin{bmatrix} 
  -0.0831 + 0.3288j & \multirow{4}{*}{$\hat{x}_1^{*}$} &  -0.5053 &   \phantom{-}0.2031  \\
   \phantom{-}0.1919 - 0.4635j  &  &  \phantom{-}0.5612 & -0.2505 \\
   \phantom{-}0.1603 + 0.5697j &    & \phantom{-}0.5617   & \phantom{-}0.8441\\
   \phantom{-}0.3546 + 0.3965j &  &  -0.3379  &  \phantom{-}0.4283 
   \end{bmatrix}$ \\ 
     \rule{0pt}{21pt}$\hat{L}_1 =\:\begin{bmatrix}
  -0.5569 + 1.4099j &   \multirow{4}{*}{\phantom{-}$\hat{l}_1^{*}$\phantom{-}} & -0.9154  &  \phantom{-}2.5568\\
  -0.2967 + 1.0244j &  & -0.5643   & \phantom{-}1.7468\\
  -0.0687 + 0.0928j &    & \phantom{-}0.0087    & \phantom{-}0.2722\\
   \phantom{-}0.2259 - 0.2523j &   & \phantom{-}0.0869   & -0.4692
   \end{bmatrix}$ \\
   \rule{0pt}{15pt}$\hat{F}_1 = \begin{bmatrix}
   -0.1111 &  -0.1089  & -0.0312  & -0.4399\\
   -0.1774 &  -0.2072  &  \phantom{-}0.0029  &  \phantom{-}0.1348
   \end{bmatrix}$, $\lVert\hat{F}_1\rVert_F = 0.5580$ \\
   \rule{0pt}{15pt}$\hat{F}_2 = \begin{bmatrix}
    \phantom{-}0.3817 &  -0.3349  &  \phantom{-}0.7280 &  -0.2109\\
   -0.0873  & -0.4488  & -0.4798 &  -0.0476
   \end{bmatrix}$, $\lVert\hat{F}_2\rVert_F = 1.1286$ \\
   \rule{0pt}{15pt} $\hat{F}_3 = \begin{bmatrix}
    0.3130  &  \phantom{-}2.0160 &   \phantom{-}1.2547 &  -0.6608\\
    0.0683 &  -0.7352 &  -0.0748 &  -1.0491
   \end{bmatrix}$, $\lVert\hat{F}_3\rVert_F = 2.7972$ \\
    \rule{0pt}{10pt} Average \# of Steepest/Newton descent iterations = $5402.1/15.5$\\
   \hline	\hline	\\[-0.7em]			
  \textbf{Approximately-sparse solutions by Algorithm \ref{algo:unsp_des_pen} with $\mathbf{w=30}$} \\
  \hline  
    \rule{0pt}{15pt}$\hat{F}_1 = \begin{bmatrix}
   0.9652 &  -1.3681  &  \phantom{-}0.0014  & -0.0021\\
    0.4350  & -0.0023 &  -0.6746 &  -0.1594
   \end{bmatrix}$, $\lVert\hat{F}_1\rVert_F = 1.8636$ \\
   \rule{0pt}{15pt}$\hat{F}_2 = \begin{bmatrix}
   -1.0599  & -1.7036 &  -0.0013  & -0.0071 \\
   -0.1702 &  -0.0057  &  \phantom{-}0.0263  & -0.0582
   \end{bmatrix}$, $\lVert\hat{F}_2\rVert_F =  2.0146$ \\
     \rule{0pt}{15pt}$\hat{F}_3 = \begin{bmatrix}
    \phantom{-}3.3768  &  \phantom{-}2.2570  &  0.0410  & -0.0098\\
   -1.4694 &  -0.0061  &  0.1242  & -3.8379
   \end{bmatrix}$, $\lVert\hat{F}_3\rVert_F = 5.7795$ \\
    \rule{0pt}{10pt} Average \# of Newton descent iterations = $19.1$\\
   \hline		 \hline	\\[-0.7em]		
  \textbf{Sparse solutions by Algorithm \ref{algo:sp_des}} \\
  \hline 
   \rule{0pt}{21pt} $\hat{X}_1 =
  \begin{bmatrix} 
   -0.3370 + 0.3296i & \multirow{4}{*}{$\hat{x}_1^{*}$}  &  -0.4525 &  \phantom{-}0.5168 \\
   \phantom{-}0.2884 - 0.1698i  & &  \phantom{-}0.1764 &  -0.3007 \\
  -0.4705 + 0.5066i &  &  -0.7478 &   \phantom{-}0.7831 \\
   \phantom{-}0.2754 + 0.3347i  & &  -0.4527 &   \phantom{-}0.1711 
   \end{bmatrix}$ \\ 
    \rule{0pt}{21pt} $\hat{L}_1 =\begin{bmatrix}
  \phantom{-}25.2072 +16.5227i  & \multirow{4}{*}{$\hat{l}_1^{*}$} & \phantom{-}44.6547  & \phantom{-}82.3616 \\
  \phantom{-}11.0347 +11.8982i  & &  \phantom{-}12.5245 &  \phantom{-}35.8266 \\
 -12.2600 - 5.8549i &  & -24.5680 & -39.1736\\
  -0.6417 - 2.2060i  & &  -0.4383 &  -3.6464
   \end{bmatrix}$ \\
    \rule{0pt}{15pt}$\hat{F}_1 = \begin{bmatrix}
  0.9627 &  -1.3744  &  \phantom{-}0.0000  & \phantom{-}0.0000 \\
    0.4409 &  \phantom{-}0.0000  & -0.6774 &  -0.1599
   \end{bmatrix}$, $\lVert\hat{F}_1\rVert_F = 1.8694$ \\
   \rule{0pt}{15pt}$\hat{F}_2 = \begin{bmatrix}
    -1.0797  & -1.7362 &  0.0000  & \phantom{-}0.0000 \\
   -0.1677 &  \phantom{-}0.0000  &  0.0264  &  -0.0610
   \end{bmatrix}$, $\lVert\hat{F}_2\rVert_F = 2.0525$ \\
     \rule{0pt}{15pt}$\hat{F}_3 = \begin{bmatrix}
    \phantom{-}3.4465  &  \phantom{-}2.2568  &  0.0000  & \phantom{-}0.0000\\
   -1.8506  & -0.0000 &   0.3207 &  -4.0679
   \end{bmatrix}$, $\lVert\hat{F}_3\rVert_F = 6.0866$ \\
    \rule{0pt}{10pt} Average \# of Newton descent iterations:\\
    1. Using random initialization = $8231$\\
    2. Using initialization by Algorithm \ref{algo:unsp_des_pen} = $715$\\
   \hline
\end{tabular}
\end{center}
\label{tab:NS_AS_S_Soln}
\end{table} 

The eigenvalues of $A$ are $\Gamma(A) = \{-2,-1,1\pm 2j\}$. Thus, the feedback $F$ is required to move two unstable eigenvalues into the stable region while keeping the other two stable eigenvalues fixed. Table \ref{tab:NS_AS_S_Soln} shows the non-sparse, approximately-sparse and sparse solutions obtained by Algorithms \ref{algo:unsp_des}, \ref{algo:unsp_des_pen} and \ref{algo:sp_des}, respectively, and Figure \ref{fig:iter_run} shows a sample iteration run of these algorithms. Since the number of iterations taken by the algorithms to converge depends on their starting points, we report the average number of iterations taken over $1000$ random starting points. Further, to obtain approximately-sparse solutions, we use the weighing matrix with $W_{ij}=w$ if $\F_{ij}=1$. 
All the algorithms obtain three local minima, among which the first is the global minimum. The second column in $\hat{X}$ and $\hat{L}$ is conjugate of the first column (c.f. Remark \ref{rem:conj_eigvec}). It can be verified that the non-sparse and sparse solutions satisfy the optimality conditions of Theorem \ref{thm:opt_feedback}. 

For the non-sparse solution, the number of iterations taken by
Algorithm \ref{algo:unsp_des} with steepest descent are considerably
larger than the Newton descent. This is because the steepest descent
converges very slowly near a local minimum. Therefore, we use Newton
descent steps in Algorithms \ref{algo:unsp_des} and
\ref{algo:unsp_des_pen}.  Next, observe that the entries at the
sparsity locations of the locally minimum feedbacks obtained by
Algorithm \ref{algo:unsp_des_pen} have small magnitude. Further, the
average number of Newton descent iterations for convergence and the
norm of the feedback obtained of Algorithm \ref{algo:unsp_des_pen} is
larger as compared to Algorithm \ref{algo:unsp_des}. This is because
the approximately-sparse optimization problem \eqref{eq:opt_cost_pen}
is effectively more restricted than its non-sparse
counterpart~\eqref{eq:opt_cost_unsp2}.

Finally, observe that the solutions of Algorithm \ref{algo:sp_des} are
sparse. Note that Algorithm \ref{algo:sp_des} involves the use of
projection Algorithm \ref{algo:sp_EAP2}, which in turn involves
running Algorithm \ref{algo:unsp_des} multiple times. Thus, for a
balanced comparison, we present the total number of Newton descent
iterations of Algorithm \ref{algo:unsp_des} involved in the execution
of Algorithm \ref{algo:sp_des}.\footnote{The number of outer
  iteration of Algorithm \ref{algo:sp_des} are considerably less, for
  instance, $20$ in Figure \ref{fig:iter_run}.} From Table
\ref{tab:NS_AS_S_Soln}, we can observe that Algorithm
\ref{algo:sp_des} involves considerably more Newton descent iterations
compared to Algorithms \ref{algo:unsp_des} and
\ref{algo:unsp_des_pen}, since it involves computationally intensive
projection calculations by Algorithm \ref{algo:sp_EAP2}. One way to
reduce its computation time is to initialize is \emph{near} the local
minimum using the approximately sparse solution of Algorithm
\ref{algo:unsp_des_pen}.


\begin{figure}[h!]
\centering
\includegraphics[width=\columnwidth]{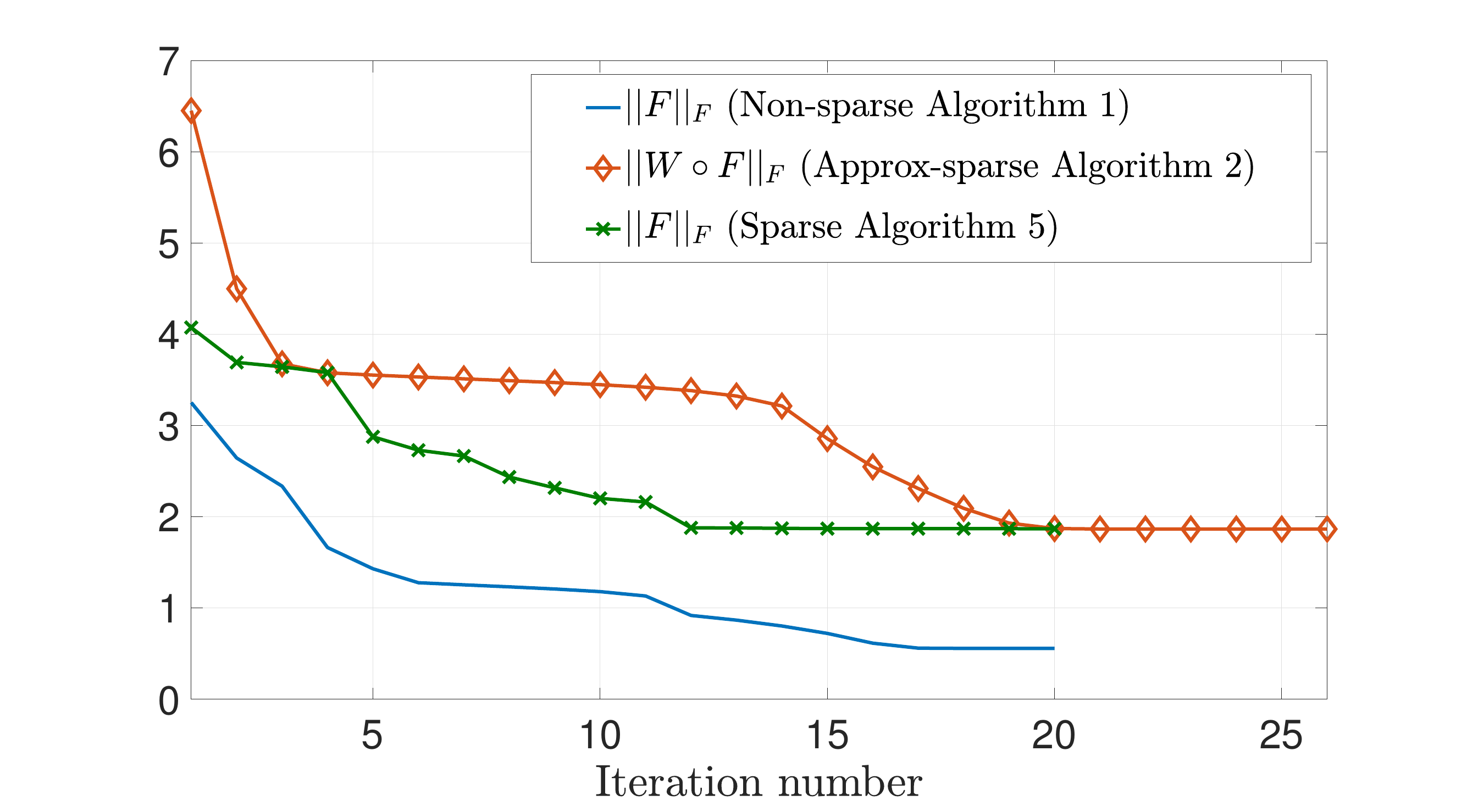}
\caption{Optimization costs for a sample run of Algorithms
  \ref{algo:unsp_des}, \ref{algo:unsp_des_pen} and \ref{algo:sp_des}
  (the algorithms converge to their global minima). For Algorithm
  \ref{algo:sp_des}, number of outer iterations are reported.}
\label{fig:iter_run}
\end{figure}

\begin{figure}[h!]
  \centering
  \subfigure[]{
  \includegraphics[width=.45\columnwidth]{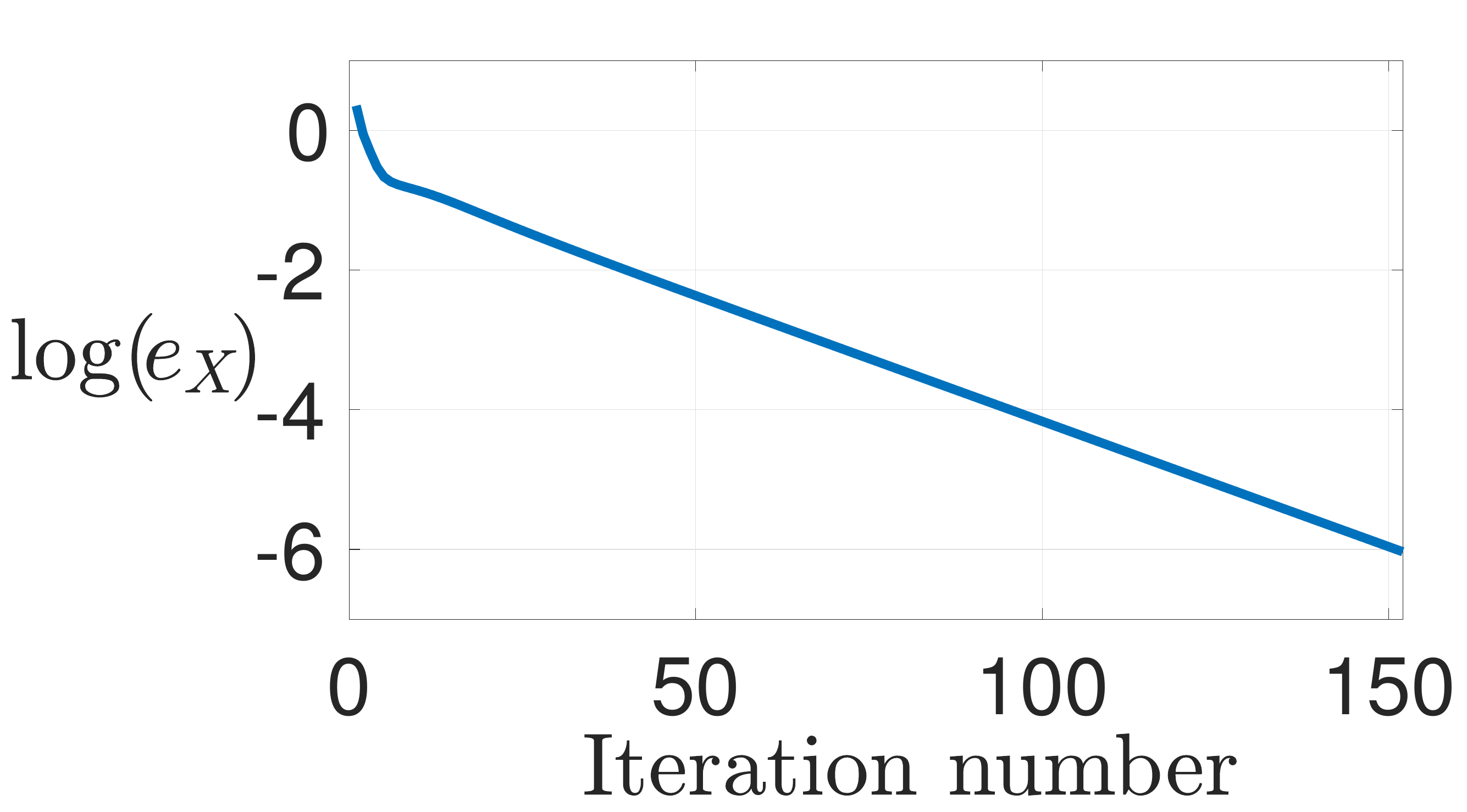} \label{fig:EAP1}} 
  \subfigure[]{
  \includegraphics[width=.45\columnwidth]{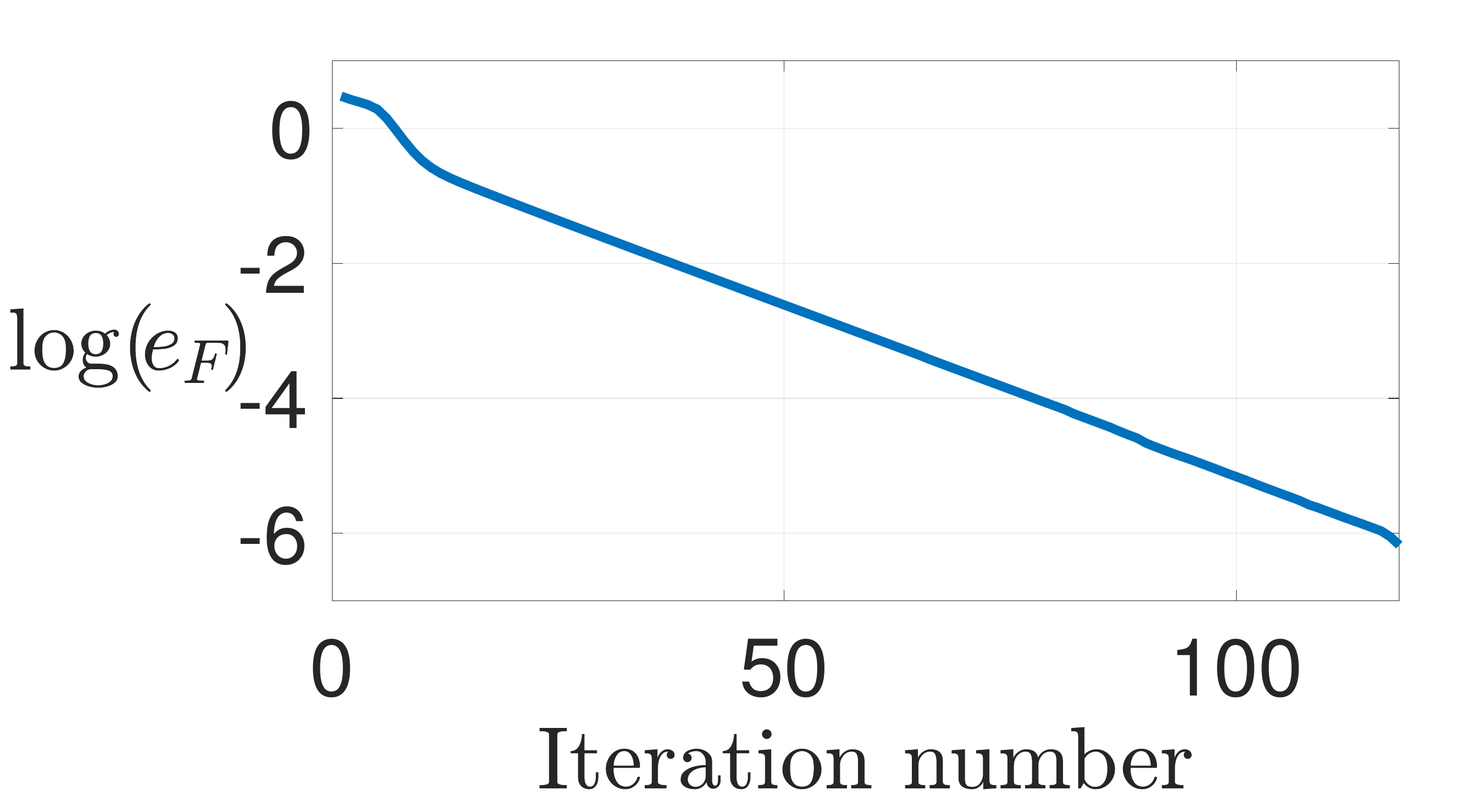} \label{fig:EAP2}}
  \caption{Projection and sparsity errors for a sample run of (a) Algorithm \ref{algo:sp_EAP1} and (b) Algorithm \ref{algo:sp_EAP2}, respectively.}
  \label{fig:EAP_sample_run}
\end{figure}

Figure \ref{fig:EAP_sample_run} shows a sample run of EAP Algorithms \ref{algo:sp_EAP1} and \ref{algo:sp_EAP2} for $G_0 = \left[\begin{smallmatrix} -1.0138 &   \phantom{-}0.6851 &  -0.1163 &   0.8929\\ -1.8230 &  -2.2041  & -0.1600  &  0.7293 \end{smallmatrix}\right]$. The sparse feedback obtained by Algorithms \ref{algo:sp_EAP1} and \ref{algo:sp_EAP2} are $F = \left[\begin{smallmatrix}\phantom{-}0.1528 &  -2.6710   &      0.0000     &    \phantom{-}0.0000 \\
    -0.8382   &      \phantom{-}0.0000  &  0.1775 &  -0.1768\end{smallmatrix}\right]$ and $F = \left[\begin{smallmatrix}  \phantom{-}4.2595  &  4.2938  & \phantom{-}0.0000  & \phantom{-}0.0000\\
    -0.2519 & 0.0000 & -2.1258 & -1.3991\end{smallmatrix}\right]$,
respectively. The projection error $e_X$ and the sparsity error $e_F$
capture the convergence of Algorithms \ref{algo:sp_EAP1} and
\ref{algo:sp_EAP2}, respectively. Figure \ref{fig:EAP_sample_run}
shows that these errors decrease, thus indicating convergence of the
algorithms.

Next, we provide an empirical verification of the convergence of
heuristic Algorithms \ref{algo:sp_EAP1} and \ref{algo:sp_EAP2}. Let
the sparsity ratio ($SR$) be defined as the ratio of the number of
sparse entries to the total number of entries in $F$ (i.e.
$SR = \frac{\text{Number of } 0's \text{ in } \F}{mn}$). We perform
$1000$ random executions of both the algorithms. In each execution,
$n$ is randomly selected between $4$ and $20$ and $m$ is randomly
selected between $2$ and $n$. Then, matrices $(A,B)$ are randomly
generated with appropriate dimensions. Next, a binary sparsity pattern
matrix $\F$ is randomly generated with the number of sparsity entries
given by $\lfloor SR\times mn \rfloor$, where $\lfloor \cdot \rfloor$
denotes rounding to the next lowest integer. To ensure feasibility
(c.f. Assumption \ref{assump:feasibility} and discussion below), we pick the
desired eigenvalue set $\mc{S}$ randomly as follows: we select a
random $F_r$ which satisfies the selected sparsity pattern $\F$, and
select $\mc{S} = \Gamma(A+BF_r)$. Finally, we set $iter_{max} = 1000$
and select a random starting point $G_0 (F_0,X_0)$, and run both
algorithms from the same starting point. Let $F_{sol}$ denote the
feedback solution obtained by Algorithms \ref{algo:sp_EAP1} and
\ref{algo:sp_EAP2}, respectively, and let
$d_{F_{sol},F_0} \triangleq \lVert F_{sol}-F_0 \rVert_F$ denote the
distance between the starting point $F_0$ and the final
solution. Since Algorithm \ref{algo:sp_EAP2} is a projection
algorithm, the metric $d_{F_{sol},F_0}$ captures its projection
performance.

\begin{table}[h!]
\caption{Convergence properties of EAP Algorithms \ref{algo:sp_EAP1} and \ref{algo:sp_EAP2}}
\begin{center}
 \begin{tabular}{| l | l | l | }
  \hline 
  $\mathbf{SR}$ & \multicolumn{1}{|c|}{\textbf{Algorithm \ref{algo:sp_EAP1}}}  & \multicolumn{1}{|c|}{\textbf{Algorithm \ref{algo:sp_EAP2}}}\\
  \hline
  \multirow{2}{*}{$1/4$}  & Convergence instances = 427  & Convergence instances = 997\\
   & Average $d_{F_{sol},F_0}$ = 8.47   & Average $d_{F_{sol},F_0}$ = 1.28 \\
  \hline
  \multirow{2}{*}{$1/2$}  & Convergence instances = 220  & Convergence instances = 988\\
   & Average $d_{F_{sol},F_0}$ = 4.91  & Average $d_{F_{sol},F_0}$ =    1.61 \\
  \hline
  \multirow{2}{*}{$2/3$}  & Convergence instances = 53  & Convergence instances = 983\\
   & Average $d_{F_{sol},F_0}$ = 6.12 & Average $d_{F_{sol},F_0}$ = 1.92 \\
  \hline
 \end{tabular}
\end{center}
\label{tab:EAP1_2_Soln}
\end{table} 

Table \ref{tab:EAP1_2_Soln} shows the convergence results of
Algorithms \ref{algo:sp_EAP1} and \ref{algo:sp_EAP2} for three
different sparsity ratios. While the convergence of Algorithm
\ref{algo:sp_EAP1} deteriorates as $F$ becomes more sparse, Algorithm
\ref{algo:sp_EAP2} converges in almost all instances. This implies
that in the context of Algorithm \ref{algo:sp_des}, Algorithm
\ref{algo:sp_EAP2} provides a valid projection in step
\ref{line:sp_MGEAP1} in almost all instances. We remark that in the
rare case that Algorithm \ref{algo:sp_EAP2} fails to converge, we can
reduce the step size $\alpha$ to obtain a new $G_{ns}$ and compute its
projection. Further, we compute the average of distance
$d_{F_{sol},F_0}$ over all executions of the Algorithms
\ref{algo:sp_EAP1} and \ref{algo:sp_EAP2} that converge. Observe that
the average distance for Algorithm \ref{algo:sp_EAP2} is smaller than
Algorithm \ref{algo:sp_EAP1}. This shows that Algorithm
\ref{algo:sp_EAP2} provides considerably better projection of $F_0$ in
the space of sparse matrices as compared to Algorithm
\ref{algo:sp_EAP1} (c.f. Remark \ref{rem:good_proj}). Note that the
above simulations focus on the convergence properties as $SR$ changes
and they do not capture the individual effects of the number of
sparsity constraints, $m$ and $n$ (for instance, small systems versus
large systems).

\section{Feasibility of the sparse EAP}\label{sec: feasibility}
In this section we provide a discussion of the feasibility of the
sparse EAP, i.e., eigenvalue assignment with sparse, static state
feedback. We address certain aspects of this problem, and leave a
detailed characterization for future research.

\begin{lemma} \textbf{(NP-hardness)} Determining the feasibility of
  the sparse EAP is NP-hard.
\end{lemma}
\begin{proof}
  We prove the result using the NP-hardness property of the static
  output feedback pole placement (SOFPP) problem \cite{MF:04}, which
  is stated as follows: for a given $A\in\real^{n\times n}$,
  $B\in\real^{n\times m}$, and $C\in\real^{p\times n}$, determine if
  there exists a non-sparse $K\in\real^{m\times p}$ such that the
  eigenvalues of $A+BKC$ are at some desired locations. Without loss
  of generality, we assume that $C$ is full row rank. Thus, there
  exists an invertible
  $T=\begin{bmatrix}C^{+} & V\end{bmatrix} \in\real^{n\times n}$ with
  $CV=0$. Taking the similarity transformation by $T$ (which preserves
  the eigenvalues) we get
\begin{align*}
T^{-1} (A+BKC) T = \underbrace{T^{-1}AT}_{\bar{A}} + \underbrace{T^{-1}B}_{\bar{B}}\begin{bmatrix} K & 0 \end{bmatrix}.
\end{align*} 
Clearly, the above SOFPP problem is equivalent to a sparse EAP with matrices $\bar{A},\bar{B}$ and sparsity pattern given by $\bar{F} = \begin{bmatrix} 1_{m\times p} & 0_{m\times (n-p)}\end{bmatrix}$. Thus, the NP-hardness of the sparse EAP follows form the NP-hardness of the SOFPP problem.
\end{proof}

Next, we present graph-theoretic necessary and sufficient conditions
for arbitrary eigenvalue assignment by sparse static feedback. Due to
space constraints, we briefly introduce the required graph-theoretic
notions and refer the reader to \cite{KJR:88} for more details. Given
a square matrix $A=[a_{ij}]\in\real^{n\times n}$, let $\mc{G}_A$
denote its associated graph with $n$ vertices, and let $a_{ij}$ be the
weight of the edge from vertex $j$ to vertex $i$. A closed directed
path (sequence of consecutive vertices) is called a cycle if the start
and end vertices coincide, and no vertex appears more than once along
the path (except for the first vertex). A set of vertex disjoint
cycles is called a cycle family. The width of a cycle family is the
number of edges contained in all its cycles.

\begin{lemma} \textbf{(Necessary
    conditions)} \label{lem:necc_cond_feasibility} Let
  $H =\left[ \begin{smallmatrix} A & B \\ F &
      0\end{smallmatrix}\right]$, and let $\mc{G}_H$ be its associated
  graph. Let $n_s$ be the number of sparsity constraints (zero
  entries) in the feedback matrix $F\in \real^{m\times n}$. Further,
  let $S_k$ denote the set of cycle families of $\mc{G}_H$ of width
  $k$, with $k=1,\dots,n$. Necessary conditions
  for arbitrary eigenvalue assignment with sparse, static state
  feedback are:
  \begin{enumerate}
  \item $n_s \le (m-1)n$, that is, $F$ has at least $n$ nonzero
    entries,
    
  \item for each $k=1,\cdots,n$, there exist a nonzero entry
    $f_{i_k j_k}$ of $F$ such that its corresponding edge appears in
    $S_k$.
  \end{enumerate}
\end{lemma}
\begin{proof} 
  (i) Arbitrary eigenvalue assignment requires that the feedback $F$ should
  assign all the $n$ coefficients of the characteristic polynomial
  $\text{det}(sI-A-BF)$ to arbitrary values. This requires the mapping
  $h:\real^{mn-n_s}\rightarrow \real^{n}$ from the nonzero entries
  of $F$ to the coefficients of the characteristic polynomial to be
  surjective, which imposes that the dimension of the domain of $h$
  should not be less that the dimension of its codomain.

  (ii) Let $c_k$, $k = 1,\dots,n$ denote the coefficients of the
  polynomial det($sI-A-BF$). Then, $c_k$ is a multiaffine function of
  the nonzero entries of $F$, which appear in the cycle families in
  $S_k$ \cite{KJR:88}. If there exists no feedback edge in $S_k$, then
  $c_k$ is fixed and does not depend on $F$. Thus, arbitrary
  eigenvalue assignment is not possible in this case.
\end{proof}

\begin{lemma}\textbf{(Sufficient
    conditions)}\label{lem:suff_cond_feasibility}
  Let
  $H = \left[ \begin{smallmatrix} A & B \\ F &
      0\end{smallmatrix}\right]$, and let $\mc{G}_H$ be its associated
  graph. Let $S_k$ denote the set of cycle families of $\mc{G}_H$ of
  width $k$, and let $F_k$ denote the set of feedback
  edges\footnote{Feedback edges are those associated with the nonzero
    entries of $F$.} contained in $S_k$, with $k=1,\dots,n$. Then, each
  of the following conditions is sufficient for arbitrary eigenvalue
  assignment with sparse, static state feedback:
  \begin{enumerate}
  \item for each $k=1,\cdots,n$, there exist a feedback edge that
    appears in $S_k$ and not in $S_j$, for all $j \neq k$,

    
  \item there exists a permutation $\{i_1,i_2,\cdots,i_n\}$ of
    $\{1,2,\cdots,n\}$ such that
    $\emptyset \neq F_{i_1}\subset F_{i_2} \subset \cdots \subset
    F_{i_n}$.
  \end{enumerate}
\end{lemma}
\begin{proof}
  (i) Similar to the proof of Lemma \ref{lem:necc_cond_feasibility},
  if there exist a nonzero entry $f_{i_k j_k}$ that is exclusive to $S_k$,
  then such variable can be used to assign $c_k$ arbitrarily. If this
  holds for $k=1,\dots,n$, then all coefficients of the characteristic
  polynomial can be
  assigned arbitrarily, resulting in arbitrary eigenvalue assignment.

  (ii) Condition (ii) guarantees that, for $j=2,\cdots,n$, the
  coefficient $c_{i_j}$ depends on the feedback variables of
  $c_{i_{j-1}}$ and on some additional feedback variables. These
  additional variables can be used to assign the coefficient $c_{i_j}$
  arbitrarily.
\end{proof}

To illustrate the results, consider the following example:
\begin{align*}
A = \begin{bmatrix} a_{11} & a_{12} & 0 \\ 0 & 0 & 0\\ a_{31} & 0 & 0\end{bmatrix}, B = \begin{bmatrix} b_{11} &  0 \\ 0 & b_{22} \\ 0 & 0 \end{bmatrix}, F = \begin{bmatrix} f_{11} & 0 & 0 \\ f_{21} & 0 & f_{23}\end{bmatrix}.
\end{align*}
The corresponding graph and cycle families are shown in
Fig.~\ref{fig:graph_illus}. Note that the edges $f_{11}, f_{21}$ and
$f_{23}$ are exclusive to cycle families of widths $1,2$ and $3$,
respectively. Thus, condition (i) of Lemma
\ref{lem:suff_cond_feasibility} is satisfied and arbitrary eigenvalue
assignment is possible. Next, consider the feedback
$F=\left[\begin{smallmatrix}0 & 0 & f_{13} \\ f_{21} & f_{22} &
    0 \end{smallmatrix}\right]$. In this case, the sets of feedback
edges in the family cycles of different widths are
$F_1 = \{f_{22}\}, F_2 = \{f_{22}, f_{13}, f_{21}\}$ and
$F_3 = \{f_{22},f_{13}\}$. We observe that
$F_1\subset F_3 \subset F_2$ (condition (ii) of Lemma
\ref{lem:suff_cond_feasibility}) and arbitrary eigenvalue assignment
is possible. Further, if
$F=\left[\begin{smallmatrix}0 & f_{12} & f_{13} \\ f_{21} & 0 &
    0 \end{smallmatrix}\right]$, then $F_1=F_3=\emptyset$, and the
coefficients $c_1$, $c_3$ of $\text{det}(sI-A-BF)$ are fixed. This
violates condition (ii) of Lemma \ref{lem:necc_cond_feasibility} and
prevents arbitrary eigenvalue assignment with the given $F$.

Note that the conditions in Lemmas
\ref{lem:necc_cond_feasibility} and \ref{lem:suff_cond_feasibility}
are constructive, and can also be used to determine a sparsity pattern
that guarantees feasibility of the sparse EAP. We leave the design of
such algorithm as a topic of future investigation.

\begin{remark} {\bf \emph{(Comparison with exiting results)}} We
  emphasize that the conditions presented in Lemmas
  \ref{lem:necc_cond_feasibility} and \ref{lem:suff_cond_feasibility}
  for arbitrary eigenvalue assignment using \emph{static} state
  feedback are not equivalent to the conditions for non-existence of
  SFMs studied in \cite{SHW-EJD:73}, \cite{MES-DDS:81,VP-MES-DDS:84,
    VP-MES-DDS:83, AA-MR:14a, AA-MR:14b}, \cite{SM-PC-MNB:18}. The
  reason is that arbitrary assignment of non-SFMs necessarily requires
  a \emph{dynamic} controller and cannot, in general, be achieved by a
  \emph{static} controller. Further, our graph-theoretic results are
  based on the feedback edges being suitably covered by cycle
  families, whereas the results in \cite{VP-MES-DDS:84, VP-MES-DDS:83}
  are based on the state nodes/subgraphs being suitably covered by
  strong components, cycles/cactus. \oprocend
\end{remark}

\begin{figure}[t]
  \centering \subfigure[Graph $\mc{G}_H$, where blue and orange nodes
  correspond to state and control vertices, respectively.]{
    \hspace{1pt} \includegraphics[scale=0.25,page=1]{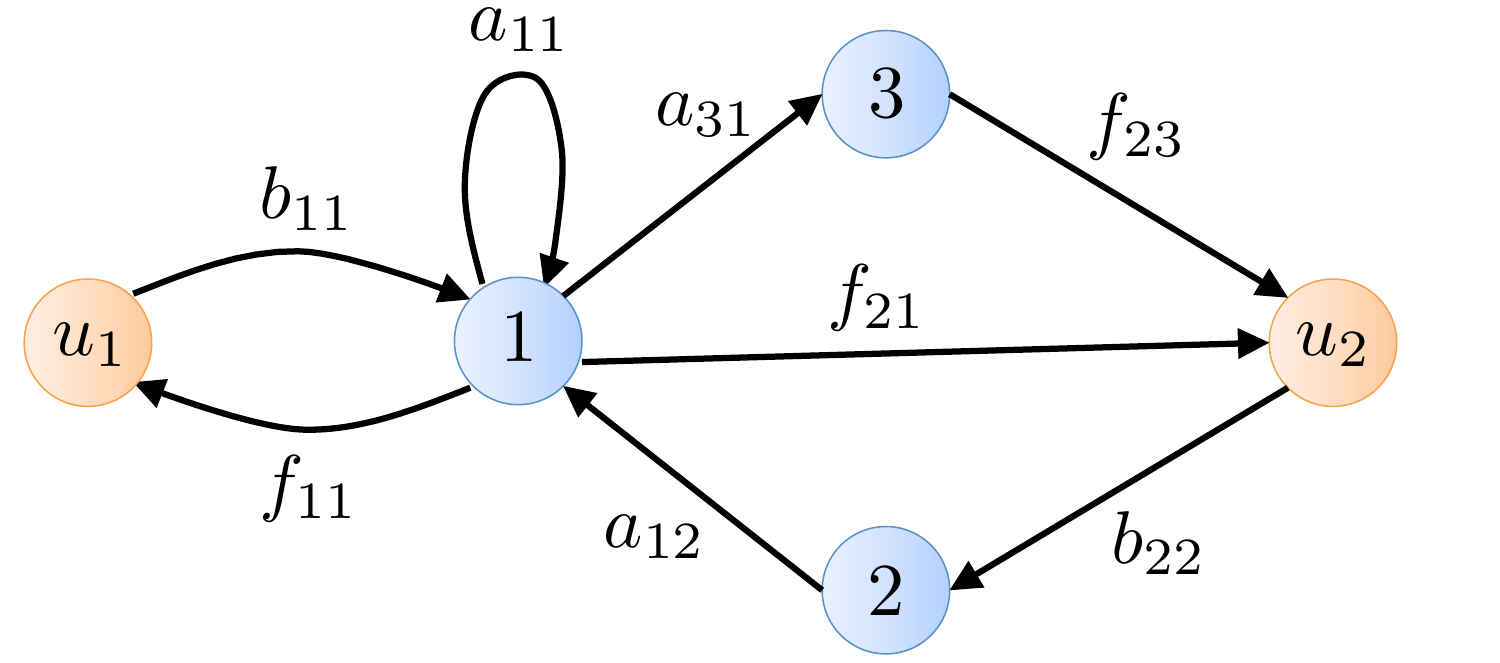} \label{fig:graph}}
  \subfigure[Cycle families of $\mc{G}_H$. All cycle families contain
  a single cycle. There are two cycle families of width $1$, and one
  cycle family each of width $2$ and $3$.]{
 \includegraphics[scale=0.25,page=2]{./img/graph} \label{fig:cycle_families}}
  \caption{Graph $\mc{G}_H$ and its cycle families. $u_1$, $u_2$
    denote the control vertices.}
  \label{fig:graph_illus}
\end{figure}

\section{Conclusion}\label{sec:conclusion}
In this paper we studied the MGEAP for LTI systems with arbitrary
sparsity constraints on the static feedback matrix. We presented an
analytical characterization of its locally optimal solutions, thereby
providing explicit relations between an optimal solution and the
eigenvector matrices of the associated closed loop system. We also
provided a geometric interpretation of an optimal solution of the
non-sparse MGEAP. Using a Sylvester-based parametrization, we
developed a heuristic projected gradient descent algorithm to obtain
local solutions to the MGEAP. We also presented two novel algorithms
for solving the sparse EAP and an algorithm to obtain approximately
sparse local solution to the MGEAP. Numerical studies suggest that our
heuristic algorithm converges in most cases. Further, we also
discussed the feasibility of the sparse EAP and provided necessary and
sufficient conditions for the same.


The analysis in the paper is developed, for the most part, under the
assumption that the sparse EAP problem with static feedback is
feasible. A future direction of research includes a more detailed
characterization of the feasibility of the EAP, a constructive
algorithm to determine feasible sparsity patterns, a convex relaxation
of the sparse MGEAP with guaranteed distance from optimality, and a
more rigorous analysis of convergence of Algorithms \ref{algo:sp_EAP2}
and \ref{algo:sp_des}.



\bibliographystyle{unsrt}
\bibliography{./bib/alias,./bib/Main,./bib/New,./bib/FP}

\end{document}